\tikzset{
    vertex/.style={circle,draw,minimum size=1.5em},
    edge/.style={->,> = latex'}
}
\pgfplotsset{compat=1.17}
\numberwithin{equation}{section}     
\declaretheorem[numberwithin=section]{theorem} \declaretheorem[sibling=theorem]{proposition} 
\declaretheorem[sibling=theorem]{corollary} 
\declaretheorem[sibling=theorem]{lemma}
\declaretheorem[sibling=theorem]{question}
\declaretheorem[sibling=theorem, style=definition]{definition}
\renewcommand*{\backref}[1]{}
\renewcommand*{\backrefalt}[4]{\tiny 
  \ifcase #1 (\textbf{NOT CITED.})%
  \or    (Cited on page~#2.)%
  \else   (Cited on pages~#2.)%
  \fi}
\def\env@matrix{\hskip -\arraycolsep
  \let\@ifnextchar\new@ifnextchar
  \array{*\c@MaxMatrixCols c}}
\renewcommand*\env@matrix[1][*\c@MaxMatrixCols c]{%
  \hskip -\arraycolsep
  \let\@ifnextchar\new@ifnextchar
  \array{#1}}
\DeclareMathOperator{\tr}{tr}
\DeclareMathOperator{\JSR}{JSR}
\newcommand{\tribar}[1]{\mathopen{| {\kern -1.5pt} | {\kern -1.5pt} |} {#1}
\mathclose{| {\kern -1.5pt} | {\kern -1.5pt} |}}
\newcommand{\arxiv}[1]{\href{http://arxiv.org/abs/#1}{arXiv:{#1}}}
\newcommand{\doi}[1]{\href{http://dx.doi.org/#1}{\tt DOI}} 
\newcommand{\directlink}[1]{\href{#1}{\tt URL}}
\newcommand{\MRev}[1]{\href{https://mathscinet.ams.org/mathscinet-getitem?mr=#1}{\tt MR}} 
\newcommand{\Zbl}[1]{\href{https://zbmath.org/?q=an:#1}{\tt Zbl}} 
\newcommand{\OEIS}[1]{\href{https://oeis.org/#1}{\tt{#1}}}
\begin{document}

\title{Spectrum Maximizing Products are not generically unique}

\author{Jairo Bochi and Piotr Laskawiec}
\address{Department of Mathematics, The Pennsylvania State University}
\email{\href{mailto:bochi@psu.edu}{bochi@psu.edu}, \href{mailto:ppl5146@psu.edu}{ppl5146@psu.edu}}

\date{First version: January, 2023. Revision: August, 2023.}

\subjclass[2020]{15A18 (primary); 15A60, 20G05, 37H15 (secondary)}

\begin{abstract}
It is widely believed that typical finite families of $d \times d$ matrices admit finite products that attain the joint spectral radius. This conjecture is supported by computational experiments and it naturally leads to the following question: are these spectrum maximizing products typically unique, up to cyclic permutations and powers? We answer this question negatively. As discovered by Horowitz around fifty years ago, there are products of matrices that always have the same spectral radius despite not being cyclic permutations of one another. We show that the simplest Horowitz products can be spectrum maximizing in a robust way; more precisely, we exhibit a small but nonempty open subset of pairs of $2 \times 2$ matrices  $(A,B)$ for which the products $A^2 B A B^2$ and $B^2 A B A^2$ are both spectrum maximizing. 
\end{abstract}

\maketitle

\section{Introduction}

The \emph{joint spectral radius} of a family of linear operators was introduced by Rota and Strang \cite{rota-strang} in 1960 and later became a topic of intense research. It measures the maximal asymptotic growth rate of products of matrices drawn from the family; more precisely: 
\begin{definition}
Let $\mathcal{A}$ be a bounded set of real or complex $d\times d$ matrices.
The \emph{joint spectral radius} (\emph{JSR} in short) of $\mathcal{A}$ is given by: 
\begin{equation}\label{e.def_JSR}
\JSR(\mathcal{A}) \coloneqq \lim_{k\to\infty}\sup_{\Pi \in \mathcal{A}^{*k}}\|\Pi\|^{\frac{1}{k}} \, ,
\end{equation}
where $\mathcal{A}^{*k}$
denotes the collection of all products of elements of~$\mathcal{A}$ with length $k$.
\end{definition}
The limit \eqref{e.def_JSR} exists as a consequence of Fekete's lemma and is independent of the matrix norm $\| \mathord{\cdot} \|$. 
In the case that $\mathcal{A}$ is a singleton $\{A\}$,
Gelfand's formula tells us that
the JSR coincides with the spectral radius:
\begin{equation} \label{e.def_rho}
\rho(A) \coloneqq \max\{|\lambda| : \lambda \text{ is an eigenvalue of }A \} \, ,
\end{equation}
and so it is easy to compute.
The situation is completely different   for two or more matrices.
The JSR is no longer a semialgebraic function: see \cite{nonalgebraic}.
It was shown in \cite{hard} that the JSR of a pair of integer matrices cannot be computed in polynomial time, unless $P=\mathit{NP}$.
Worse than that,
among pairs of rational matrices, it is an undecidable problem whether $\JSR\leq 1$, in the sense that no algorithm is able to reliably answer such question in a finite amount of time: see~\cite{undecidable}. 

Still, computable bounds for the JSR are available. 
As noted in \cite[Lemma~3.1]{DaubechiesL}, the JSR of a bounded family $\mathcal{A}$ can be squeezed between two sequences:
\begin{equation}\label{e.squeeze}
\sup_{\Pi \in \mathcal{A}^{*k}}\rho(\Pi)^\frac{1}{k}\leq \JSR(\mathcal{A})\leq \sup_{\Pi \in \mathcal{A}^{*k}}\|\Pi\|^{\frac{1}{k}} \, .
\end{equation}
The upper bound converges to the JSR as $k \to \infty$, by its very definition \eqref{e.def_JSR}.
As for the lower bound, convergence may fail, but the situation can be remedied by taking a limsup:
\begin{equation}\label{e.BW}
\JSR(\mathcal{A}) = \limsup_{k\to\infty}\sup_{\Pi \in \mathcal{A}^{*k}}\rho(\Pi)^\frac{1}{k} \, ; 
\end{equation}
this is the Berger--Wang theorem \cite{berger-wang} (see \cite[Theorem~4]{CGP}, \cite{Breuillard} for an improved version).
Therefore one can use the inequalities \eqref{e.squeeze} to approximate the JSR: see e.g.\ \cite[\S~2.3.3]{Jungers} for discussion of some computational issues.

From now on, let us restrict the discussion to finite families $\mathcal{A} = \{A_1,\dots,A_n\}$, in which case we write $\JSR(\mathcal{A}) = \JSR(A_1,\dots,A_n)$.
In favorable circumstances, the bounds in \eqref{e.squeeze} may become equalities, and then the computation of the JSR is greatly simplified. 
Under mild conditions on the set of matrices $\mathcal{A}$, it admits an \emph{extremal norm}, that is, a norm on $\mathbb{R}^d$ for which the second inequality in \eqref{e.squeeze} becomes an equality with $k=1$: see \cite{nonalgebraic} or \cite[\S~2.1.2]{Jungers}.

The situation where the first inequality in \eqref{e.squeeze} becomes an equality is central to this note.
It is convenient to focus our attention on products $\Pi = A_{i_1} \cdots A_{i_k}$  that are \emph{primitive}, that is, the word $i_1\cdots i_k$ is not a power of a shorter word. (This definition should not be confused with the notion of primitivity used in group theory.)

\begin{definition}
Let $\Pi\in \mathcal{A}^{*k}$ be a primitive product of lenght $k$.
We say that $\Pi$ is a \emph{spectrum maximizing product} (\emph{SMP} in short) if
\begin{equation}
\JSR(\mathcal{A})=\rho(\Pi)^\frac{1}{k} \, .
\end{equation}
\end{definition}
At an early stage, it was hoped  that  every finite family of matrices would have an SMP \cite{Wang}. This came to be known as the \emph{finiteness conjecture} which was eventually refuted by Bousch and Mairesse \cite{finiteness}; later constructions were offered by several authors \cite{BTV,Kozyakin,Hare, JenkinsonPollicott}.
On the other hand, the counterexamples seem to form a small set, and 
Maesumi \cite[Conjecture~8]{Maesumi} conjectured that  Lebesgue-almost every finite family of matrices has an SMP. 
The conjecture seems to be confirmed by experiments: see  \cite{algo1,Mejstrik}.

One way of certifying that a certain product $\Pi \in \mathcal{A}^{*k}$ is spectrum maximizing for a given family $\mathcal{A}$ is to find a norm $\|\mathord{\cdot}\|$ for which $\rho(\Pi)^{\frac{1}{k}} = \sup_{A\in \mathcal{A}} \|A\|$ and then apply \eqref{e.squeeze} to conclude simultaneously  that $\Pi$ is a SMP and $\|\mathord{\cdot}\|$ is an extremal norm. 
Thus, the problem boils down to efficiently finding such extremal norms.  A solution is provided by the \emph{polytope algorithm} \cite{GWZ2005,algo1,algo2}. See also \cite{VGJ,Protasov21,Mejstrik} for further information.

When it terminates, the polytope algorithm not only validates a product as an SMP, but it also ensures it is \emph{robust} in the sense that if the matrices in the (finite) family $\mathcal{A}$ are slightly perturbed, the same product is still spectrum maximizing. So it seems plausible that the counterexamples to the finiteness conjecture form a \emph{nowhere dense} set of zero Lebesgue measure. 

Cyclic permutations of an SMP are also SMPs. 
Ignoring such repetitions, experimentally found SMPs are usually unique. 
This raises the following question: 
\begin{question}\label{q.unique}
Does Lebesgue-almost every finite family of matrices have a unique SMP, up to cyclic permutations?
\end{question}
More precisely, given integers $d\ge 2$ and $n\ge 2$, does the set of $n$-tuples of $d\times d$ matrices with the unique SMP property have full Lebesgue measure in $\mathbb{R}^{nd^2}$?

This sort of problem pertains to the wider field of \emph{ergodic optimization}. The JSR of a bounded family of matrices $\mathcal{A}$ is the maximal Lyapunov exponent of a related linear cocycle over a shift dynamics: see \cite{BochiICM} and references therein. In this context, there always exist at least one \emph{Lyapunov-maximizing measure}, meaning a shift-invariant probability measure with  maximal Lyapunov exponent, and if such a measure is supported on a periodic orbit then the family of matrices admits an SMP.
Furthermore, cyclic permutations of an SMP correspond to exactly the same measure. Therefore \cref{q.unique} is asking about typical existence of a unique finitely-supported Lyapunov-maximizing measure.

Ergodic optimization is more developed in the commutative situation where one seeks to maximize Birkhoff averages instead of Lyapunov exponents. In that situation, Hunt and Ott  \cite{HuntOtt} conjectured that having a unique maximizing measure supported on a periodic orbit should be a typical property, at least for sufficiently chaotic underlying dynamical systems. Contreras \cite{Contreras} has proved a positive result in that direction, using however a topological notion of typicality. On the other hand,  uniqueness of maximizing measures (finitely supported or not)  was proved to be a typical property in topological and probabilistic senses by Jenkinson \cite[Theorem~3.2]{Jenk} and Morris \cite{Morris_prevalent}, respectively.

In this state of affairs, it came as a surprise to us that \cref{q.unique} actually has a negative answer. 
Our starting point is the following curious algebraic observation:

\begin{lemma}\label{l.SC}
For all pairs of $2 \times 2$ (real or complex) matrices $A$, $B$, the two matrix products
\begin{equation}\label{e.SC_pair}
A^2BAB^2 \quad \text{and} \quad B^2ABA^2
\end{equation}
have the same eigenvalues. 
\end{lemma}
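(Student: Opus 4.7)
The plan is to exploit the fact that for $2 \times 2$ matrices the eigenvalues of a matrix $M$ are determined by the two invariants $\tr(M)$ and $\det(M)$. The determinants of both products in \eqref{e.SC_pair} are automatically equal by multiplicativity (both evaluate to $\det(A)^3\det(B)^3$), so the real content of the lemma lies in showing that the two products have the same trace. Using cyclic invariance of the trace, one rewrites $\tr(B^2ABA^2) = \tr(A^2 B^2 A B)$, which reduces the problem to the single identity $\tr(A^2 B A B^2) = \tr(A^2 B^2 A B)$.

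Next I would compute the difference: it factors as $A^2BAB^2 - A^2B^2AB = A^2 B (AB - BA) B = A^2 B\,[A,B]\, B$, so the task becomes showing $\tr(A^2 B\,[A,B]\, B) = 0$. To simplify the $B$'s that flank the commutator, I would invoke the Cayley--Hamilton relation $B^2 = \tr(B)\, B - \det(B)\, I$ to compute
\[
B\,[A,B]\,B = BAB^2 - B^2AB = \det(B)(AB - BA) = \det(B)\,[A,B],
\]
an attractive identity valid for any pair of $2\times 2$ matrices $A,B$ (the two occurrences of $\tr(B)\,BAB$ cancel). Substituting this back gives $\tr(A^2 B\,[A,B]\,B) = \det(B)\,\tr(A^2[A,B]) = \det(B)\bigl(\tr(A^3 B) - \tr(A^2 B A)\bigr)$, and the bracketed expression vanishes by one more use of the cyclic property.

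I do not expect a genuine obstacle here; the whole argument hinges on spotting the intermediate identity $B[A,B]B = \det(B)\,[A,B]$ that makes Cayley--Hamilton do the work. A more structural alternative would be to observe that the trace of any word in two $2\times 2$ matrices is a polynomial in the five Fricke coordinates $\tr(A),\tr(B),\tr(AB),\det(A),\det(B)$, and to verify by a direct symbolic computation that the two length-$6$ palindromic products in \eqref{e.SC_pair} produce the same polynomial; but the manipulation above seems cleaner and more instructive.
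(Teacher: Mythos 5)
Your proof is correct, and it takes a genuinely different route from the paper's. The paper sets $C \coloneqq AB - BA$, observes that $\tr C = 0$ so Cayley--Hamilton applied to $C$ makes $C^2$ a scalar multiple of the identity, hence $\tr C^3 = 0$; it then expands $C^3$ into eight length-$6$ words, groups them into three cyclic-equivalence classes, and reads off $-3\tr(A^2BAB^2) + 3\tr(B^2ABA^2) = 0$. You instead reduce immediately to showing $\tr\bigl(A^2 B\,[A,B]\,B\bigr) = 0$, and use Cayley--Hamilton applied to $B$ to derive the conjugation identity $B\,[A,B]\,B = \det(B)\,[A,B]$, after which cyclicity of the trace finishes the job. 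Both arguments ultimately run on the $2\times 2$ Cayley--Hamilton theorem, but they apply it to different matrices ($C$ versus $B$) and do different bookkeeping. Your version is a little more local and avoids the term-by-term expansion of $C^3$; the paper's version isolates the cleaner invariant statement $\tr\bigl([A,B]^3\bigr) = 0$, which hints more directly at the broader family of $2$-isospectrality identities discussed in \cref{s.algebra} (in particular, the paper later derives the general mirror-image identity of \cref{p.mirror} by a transpose trick, of which \cref{l.SC} is the shortest nontrivial instance). Either route is acceptable; yours is perhaps more elementary to discover, the paper's more suggestive of the general phenomenon.
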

Note that this \lcnamecref{l.SC} is not immediate, since the  words \eqref{e.SC_pair} are not cyclic permutations of one another; actually, for $3 \times 3$ matrices, it is not true that $\rho(A^2BAB^2) \equiv \rho(B^2ABA^2)$. \cref{l.SC} is the simplest manifestation of a more general phenomenon uncovered by Horowitz about fifty years ago; we will elaborate on this in \cref{s.algebra}. 
The main result of this note is that the products \eqref{e.SC_pair} can be robust SMPs, and in particular \cref{q.unique} has a negative answer.
More precisely:

\begin{theorem}\label{t.main}
There exists a nonempty open set $\mathcal{U}$ in $\mathcal{M}_2(\mathbb{R})^2$ such that for all $(A,B) \in \mathcal{U}$, the products $A^2BAB^2$ and $B^2ABA^2$ are spectrum maximizing. 
Furthermore, there are no other SMPs other than their cyclic permutations.
\end{theorem}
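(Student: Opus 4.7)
The plan is to prove the theorem by explicit construction combined with the polytope extremal norm method. First I would search (numerically if needed) for a concrete pair $(A_*, B_*) \in \mathcal{M}_2(\mathbb{R})^2$ such that the product $\Pi_* \coloneqq A_*^2 B_* A_* B_*^2$ appears to attain the JSR. After rescaling by $\rho(\Pi_*)^{-1/6}$, I may assume $\rho(\Pi_*) = 1$, so the candidate value of $\JSR(A_*, B_*)$ is $1$. The dominant eigenvector directions of $\Pi_*$ and $\Pi_*' \coloneqq B_*^2 A_* B_* A_*^2$ (which has the same eigenvalues by \cref{l.SC}), together with their images under the six cyclic shifts of each word, give a natural finite set of ``extremal'' vectors in $\mathbb{R}^2$.

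Next I would construct a balanced (centrally symmetric) convex polytope $P \subset \mathbb{R}^2$ whose vertices are obtained by iterating $A_*$ and $B_*$ on a starting vector from the orbit described above and taking the balanced convex hull, in the spirit of the Guglielmi--Wirth--Zennaro polytope algorithm. The goal is to certify that $A_* P \subset P$ and $B_* P \subset P$; by \eqref{e.squeeze}, the Minkowski functional of $P$ is then an extremal norm, and every vertex of $P$ lying on the orbit of the word $A^2 B A B^2$ realizes the equality $\rho(\Pi_*)^{1/6} = \|A_*\|_P = \|B_*\|_P$, which forces $\Pi_*$ to be an SMP. The Horowitz identity of \cref{l.SC} simultaneously forces $\Pi_*'$ to be an SMP. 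Because $P$ has only finitely many vertices and one can check that, on vertices not on the two Horowitz orbits, the inclusions $A_* v \in P$ and $B_* v \in P$ are strict, any other primitive periodic word would give a spectral radius strictly less than $1$; hence the only SMPs are the two products and their cyclic permutations.

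For the openness claim, the strict inclusions above are preserved under small perturbations of $A_*, B_*$: since each vertex $v$ of $P$ that is not itself an image of an active boundary vertex lies in the interior (relative to the facet containing its image), small perturbations of the matrices move $A v$ and $Bv$ by a small amount and keep them inside $P$, while the ``active'' vertices on the two Horowitz orbits get mapped to perturbed vertices of a nearby polytope $P'$ defined analogously from the perturbed matrices. The resulting $P'$ still satisfies the invariance conditions, and the same argument identifies the SMPs as the two perturbed Horowitz products. This yields the desired open neighborhood $\mathcal{U}$.

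The main obstacle I expect is the actual construction and certification of the extremal polytope $P$: the polytope algorithm is not guaranteed to terminate, so one must either exhibit $P$ explicitly and verify the finitely many inclusions by hand (or rigorous arithmetic), or else prove termination for the particular candidate pair. A secondary difficulty is ruling out ``hidden'' SMPs: one must ensure that no other periodic word with the same average $\log$-contraction rate hides in the polytope's boundary structure, which requires carefully tracking which vertices lie on which facets and showing that the only closed orbits of the vertex-to-vertex action induced by $A_*$ and $B_*$ are the two Horowitz orbits. Both issues are essentially finite-combinatorial once the polytope is in hand, but locating a good initial pair $(A_*, B_*)$ for which $P$ has a small vertex count appears to be the most delicate step.
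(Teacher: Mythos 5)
Your proposal follows essentially the same route as the paper: normalize an explicit candidate pair so that $\rho(A^2BAB^2)=1$, build a balanced invariant polytope from the leading eigenvectors of the two Horowitz products and their forward orbit, certify $AS\subseteq S$ and $BS\subseteq S$ to obtain $\JSR=1$, deduce uniqueness by showing the only vertex-to-vertex cycles of the induced graph action are the two Horowitz cycles, and observe that the finitely many strict inclusions persist under perturbation to get openness. This matches the paper's Lemmas~\ref{l.convex}--\ref{l.unique} and the surrounding robustness argument, so the plan is sound; the remaining work is exactly what you flag, namely exhibiting the concrete pair and verifying the finite list of inclusions.
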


A similar statement holds in the setting of complex matrices: see \cref{t.complex} below.

Our results are also relevant for the study of a special kind of extremal norms called \emph{Barabanov norms}. We recall the definition:

\begin{definition}
A vector norm $\tribar{\mathord{\cdot}}$ on $\mathbb{R}^d$ is a \emph{Barabanov norm} for a bounded family $\mathcal{A} \subseteq \mathcal{M}_d(\mathbb{R})$ if 
\begin{equation}\label{e.Barabanov}
\forall v \in \mathbb{R}^d, \quad \sup_{A\in \mathcal{A}} \tribar{Av} = \JSR(\mathcal{A}) \, \tribar{v} \, , 
\end{equation}
\end{definition}

Barabanov norms can be computed using polytope algorithms: see \cite{GZ15}. They are often unique (modulo rescaling). In fact, uniqueness of the SMP together with a dominance condition guarantees uniqueness of the Barabanov norm: see
 \cite[Theorem~2]{Protasov21}. On the other hand, we show:

\begin{corollary}\label{c.Barabanov}
The set of pairs of matrices that have a unique Barabanov norm (modulo rescaling) is \emph{not} dense in $\mathcal{M}_2(\mathbb{R})^2$.
\end{corollary}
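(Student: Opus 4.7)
The plan is to exploit \cref{t.main}: on the open set $\mathcal{U}$ (possibly shrunk so that the SMPs have simple real dominant leading eigenvalues, which is an open condition already enforced by the polytope algorithm behind \cref{t.main}), each pair $(A,B)$ admits two SMPs $P = A^2 B A B^2$ and $Q = B^2 A B A^2$ that are not cyclic permutations of each other, with distinct leading eigendirections $[v_P], [v_Q] \in \mathbb{P}^1(\mathbb{R})$. I will construct two non-proportional Barabanov norms on $\mathcal{U}$, and the corollary will follow.

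For each SMP, I would build an adapted Barabanov norm $\tribar{\cdot}_P$ (resp.\ $\tribar{\cdot}_Q$) by taking its unit ball to be the closed symmetric convex hull of the forward semigroup orbit of $v_P$ (resp.\ $v_Q$) under $\{A/\JSR, B/\JSR\}$. This unit ball is invariant under both $A/\JSR$ and $B/\JSR$, yielding an extremal norm. The full Barabanov equality \eqref{e.Barabanov} then follows from the fact that, under the dominance hypothesis, the orbit of the leading eigenvector is attracting in projective space, and hence every vector lies on a trajectory that eventually saturates the extremal inequality along the optimal symbol sequence. In dimension two, the unit ball is a convex polygon, and this is essentially the polytope-type construction carried out rigorously in \cite{GZ15}.

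The two norms are not proportional: the unit ball of $\tribar{\cdot}_P$ contains the orbit of $v_P$ on its boundary and, by dominance, all other $Q$-orbit points strictly inside; the roles are swapped for $\tribar{\cdot}_Q$. Therefore $\tribar{v_P}_P / \tribar{v_Q}_P \neq \tribar{v_P}_Q / \tribar{v_Q}_Q$ (after normalizing both norms to equal $1$ on $v_P$, one has $\tribar{v_Q}_P < 1$ while $\tribar{v_Q}_Q = 1/\tribar{v_P}_Q > 1$). Since the pointwise maximum of two Barabanov norms is again a Barabanov norm (immediate from $\sup_A \max(f,g) = \max(\sup_A f, \sup_A g)$), one in fact obtains a continuum of non-proportional Barabanov norms on $\mathcal{U}$, so uniqueness fails on an open set.

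The main obstacle is promoting the extremal inequality $\sup_A \tribar{A v} \le \JSR \tribar{v}$ produced by the polytope construction to the Barabanov equality for \emph{all} $v$, not just the extreme points of the unit ball. This is precisely where dominance (simple real leading eigenvalue strictly larger in modulus than the other eigenvalue) is essential: it guarantees projective attraction to the SMP orbit, which forces saturation along every trajectory. Once this is established, non-proportionality and hence \cref{c.Barabanov} follow.
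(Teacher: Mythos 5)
Your construction has the right flavor, but it conflates two dual notions, and that confusion is the crux of the matter. Taking the unit ball to be the closed symmetric convex hull of the semigroup orbit of an eigenvector produces (at best) an \emph{extremal} norm for $(A,B)$: by design $A$ and $B$ map the ball into itself, so $\sup_i \tribar{A_i v} \le \tribar{v}$. But the Barabanov condition \eqref{e.Barabanov} demands \emph{equality} for every $v$, and this fails already for the paper's own polygon $S$: as is visible in \cref{f.polygon}, $A(S)\cup B(S)$ does not cover $\partial S$, so there are boundary points $v$ for which both $Av$ and $Bv$ lie strictly inside $S$, i.e.\ $\max(\tribar{Av},\tribar{Bv})<\tribar{v}$. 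The dominance/projective-attraction argument you sketch cannot repair this: the Barabanov condition is a pointwise, one-step equality, not an ``eventual saturation along an optimal trajectory'' statement. The correct relationship, and the key tool the paper uses, is the Plischke--Wirth duality \cite{PW}: a body $C$ satisfying the Dranishnikov--Konyagin--Protasov invariance $C = \overline{\operatorname{conv}}(A(C)\cup B(C))$ yields, upon taking the polar $C^\circ$, a Barabanov ball for the \emph{transposed} pair $(A^{\mathtt t}, B^{\mathtt t})$. Your orbit-hulls are candidates for DKP bodies, but unless you pass to the polar and transpose, the object you build is not a Barabanov norm at all.

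There are secondary gaps as well. Your claim that $\tribar{v_Q}_P<1$ (the $Q$-eigenvector lies strictly inside the $P$-orbit hull) is asserted ``by dominance'' but not justified; since $v_Q$ is an extreme point of the full polygon $S\supseteq S_P$, either it lies outside $S_P$ (making $\tribar{v_Q}_P>1$) or it is extreme in $S_P$ (making $\tribar{v_Q}_P=1$), and telling these apart requires computation. You would also need to verify that each orbit hull is a genuine convex body with nonempty interior satisfying the DKP identity, and that the two resulting bodies are non-homothetic. The paper sidesteps all of this by working with a single polygon $S$ (the hull of \emph{both} eigenvector orbits), verifying the DKP identity for it, applying Plischke--Wirth to get a Barabanov norm for the transposes, and then observing that the balancing ratio $0.885$ between the lengths of the two eigenvectors \eqref{e.eigenvectors} is a free parameter: any nearby value yields a non-similar polygon and hence a non-homothetic Barabanov norm. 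That perturbation of the balancing ratio, rather than a choice between two separate orbit hulls, is what drives non-uniqueness in the paper's argument. (Your observation that the pointwise maximum of two Barabanov norms is again a Barabanov norm is correct, but it is downstream of the gap.)
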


Our findings are strictly two-dimensional. In fact,  \cref{l.SC} fails in higher dimensions, and there is no known analogous phenomenon. See \cref{ss.further,s.questions} for further discussion.

\section{Proof of the main theorem}\label{s.example}

In this section, we  prove \cref{t.main} by exhibiting an explicit example.

\subsection{Proof of Lemma~\ref{l.SC}}

As already mentioned, this \lcnamecref{l.SC} is known: indeed, it follows from \cite[eq.~(5.2b)]{Horowitz}. It was rediscovered at least once: see \cite[p.12]{Selinger}. For the convenience of the reader, let us reproduce the proof from~\cite{ME}.

\begin{proof}[Proof of the \lcnamecref{l.SC}]
Given $2\times 2$ matrices $A$, $B$, consider $C \coloneqq AB-BA$.
Since $\tr C = 0$, Cayley--Hamilton theorem tells us that $C^2$ is a scalar multiple of identity (possibly zero). 
In particular, $\tr C^3 = \tr C = 0$.
On the other hand,
\begin{multline}
C^3 = 
(ABABAB - BABABA)
+ (-ABABBA-ABBAAB-BAABAB) \\ 
+ (ABBABA+BAABBA+BABAAB) \, ,
\end{multline}
and the products in each set of parentheses are cyclic permutations of one another.
Taking the trace we obtain
\begin{equation}
- 3 \tr(A^2BAB^2) 
+ 3 \tr(B^2ABA^2) = 0  \, , 
\end{equation}
so $A^2BAB^2$ and $B^2ABA^2$ have the same trace. 
Since these two $2 \times 2$ matrices also have the same determinant, they have the same eigenvalues. 
\end{proof}

In fact, \cref{l.SC} is just the simplest instance of a more general phenomenon, which we  explain in \cref{s.algebra}.

\subsection{The example}

\ The procedure used in this section to certify SMPs is a specific instance of the invariant polytope algorithm with balancing procedure \cite{algo2,GZ15}. On the other hand, the following proof is self-contained and does not require knowledge of the polytope algorithm. The arguments involved are not new; the novelty here is the example itself. The proof ultimately relies on finitely many algebraic inequalities, which can be checked numerically on any computer algebra system.

We begin by defining the following pair of matrices:
\begin{equation}\label{e.the_example}
A_0 \coloneqq \begin{bmatrix}
0.81427& -0.32898\\
0.73419& \phantom{-}0.50393
\end{bmatrix}
\quad\text{and}\quad
B_0 \coloneqq \begin{bmatrix}
-0.06078& \phantom{-}1.01008\\
-0.88368& -0.26830
\end{bmatrix}
\end{equation}
The dominant eigenvalue of the product $A_0^2B_0A_0B_0^2$ is approximately $-0.99998$.
Then, we define two new matrices: 
\begin{equation}\label{e.matrices}
A \coloneqq \frac{A_0}{\rho(A_0^2B_0A_0B_0^2)^{1/6}} \quad\text{and}\quad B \coloneqq \frac{B_0}{\rho(A_0^2B_0A_0B_0^2)^{1/6}} \, .
\end{equation}
Actually $A$ and $B$ coincides with $A_0$ and $B_0$ up to $5$ digits.
It follows from \cref{l.SC} that the matrix products \eqref{e.SC_pair} (as well as their cyclic permutations) have dominant eigenvalue exactly $-1$. 
Next, we consider the following eigenvectors
\begin{equation}\label{e.eigenvectors}
v_4\approx\begin{bmatrix}
0.63620\\ 0.77152
\end{bmatrix} \quad\text{and}\quad v_{9}\approx\begin{bmatrix}
0.88452\\ 0.02929
\end{bmatrix}
\end{equation}
of the matrices $A^2BAB^2$ and $B^2ABA^2$, respectively, both associated to the leading eigenvalue $-1$, with Euclidean lengths exactly $1$ and $0.885$, respectively.  (Let us remark that these lengths are not arbitrary; they come from the balancing procedure, which is described in detail in \cite{algo2}.)

Then, we compute 14 other vectors by taking successive images of $v_4$ and $v_{9}$ under $\pm A$ and $\pm B$, as specified by the (disconnected) graph below.
\begin{equation}\label{e.graph}
\begin{tikzpicture}[scale=1.15,baseline=(current  bounding  box.center)]
\tikzset{vertex/.style = {shape=circle,minimum size=0em}}
\tikzset{edge/.style = {->,> = latex'}}
	
\draw (0,0) circle (.2cm);
\node[vertex] (za) at  (0,0) {$v_{4}$};
\node[vertex] (zb) at  (1,1) {$v_{13}$};
\node[vertex] (ze) at  (1+1.4,1) {$v_{6}$};
\node[vertex] (zf) at  (1+2.4,0) {$v_{2}$};
\node[vertex] (zg) at  (1+1.4,-1) {$v_{11}$};
\node[vertex] (zh) at  (1,-1) {$v_{8}$};
\node[vertex] (zee) at  (1+2.4,2) {$v_{15}$};
\draw[edge] (za) to["$B$"] (zb);
\draw[edge] (zb) to["$-B$"]  (ze);
\draw[edge] (ze) to["$A$"]  (zf);
\draw[edge] (zf) to["$B$"] (zg);
\draw[edge] (zg) to["$A$"]  (zh);
\draw[edge] (zh) to["$-A$"] (za);
\draw[edge] (ze) to["$B$"] (zee);
\begin{scope}[xshift=4.5cm]
 \draw (0,0) circle (.25cm);
 \node[vertex] (a) at  (0,0) {$v_{9}$};
  \node[vertex] (b) at  (1,1) {$v_{5}$};
\node[vertex] (e) at  (1+1.4,1) {$v_{1}$};
\node[vertex] (f) at  (1+2.4,0) {$v_{10}$};
\node[vertex] (g) at  (1+1.4,-1) {$v_{7}$};
\node[vertex] (h) at  (1,-1) {$v_{16}$};
\node[vertex] (i) at  (1+2.4,-2) {$v_{3}$};
\node[vertex] (j) at  (1+2.4+1.4,-2) {$v_{12}$};
\node[vertex] (k) at  (0,2) {$v_{14}$}; 
\draw[edge] (a) to["$A$"] (b);
\draw[edge] (b) to["$A$"]  (e);
\draw[edge] (e) to["$B$"]  (f);
\draw[edge] (f) to["$A$"] (g);
\draw[edge] (g) to["$B$"]  (h);
\draw[edge] (h) to["$B$"] (a);
\draw[edge] (g) to["$A$"]  (i);
\draw[edge] (g) to["$A$"]  (i);
\draw[edge] (i) to["$B$"]  (j);
\draw[edge] (b) to["$B$"]  (k);
\end{scope}
\end{tikzpicture}
\end{equation}

Lastly, we enlarge our list by including all opposite vectors $v_{17}\coloneqq -v_1,\dots,v_{32}\coloneqq -v_{16}$.

\begin{lemma}\label{l.convex}
The points $v_1,v_2,\dots,v_{36}$ are the cyclically ordered vertices of a convex
$36$-gon $S$.
\end{lemma}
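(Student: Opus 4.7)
My plan is to verify this by direct computation: the claim is a purely geometric statement about an explicit finite list of points in $\mathbb{R}^2$, so I will reduce it to finitely many strict numerical inequalities that can be checked by a computer algebra system.

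\emph{Step 1.} Using the matrices $A$, $B$ of \eqref{e.matrices} and the initial eigenvectors $v_4$, $v_9$ of \eqref{e.eigenvectors}, I successively apply $\pm A$ and $\pm B$ along the edges of the graph \eqref{e.graph} to obtain closed-form expressions for all the intermediate vectors. Together with the centrally symmetric points $-v_i$, this produces the complete list $v_1,\dots,v_{36}$.

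\emph{Step 2.} For each $i$, compute the argument $\theta_i\coloneqq \arg v_i \in [0,2\pi)$ and check that the prescribed enumeration $1,2,\dots,36$ agrees, up to a cyclic shift, with the ordering by increasing angle; equivalently, that each wedge from $v_i$ to $v_{i+1}$ has positive measure strictly less than $\pi$ for every $i \pmod{36}$.

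\emph{Step 3.} For each consecutive triple $v_i,v_{i+1},v_{i+2}$ (indices modulo $36$), verify the strict positivity of the oriented area $\det[\,v_{i+1}-v_i \mid v_{i+2}-v_{i+1}\,]$. A consistent sign across all triples ensures that the closed polygonal path $v_1v_2\cdots v_{36}v_1$ turns left at every vertex; combined with Step 2 this forces the path to bound a strictly convex $36$-gon and each $v_i$ to be a genuine extreme point rather than lying in the relative interior of an edge.

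The conceptual content of the lemma is minimal; the main obstacle is bookkeeping. The entries of $A$ and $B$ involve the sixth root of $\rho(A_0^2B_0A_0B_0^2)$, which is algebraic over $\mathbb{Q}$, so the $v_i$ are algebraic vectors of rather large degree and the required determinants unwind to unwieldy expressions. Fortunately, every inequality to be checked is strict (an open condition), so it is enough to evaluate everything with sufficiently many guard digits; for full rigour one can use interval arithmetic, or work symbolically inside the number field $\mathbb{Q}(\rho^{1/6})$, either of which reduces the verification to a routine finite calculation.
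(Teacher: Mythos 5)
Your proposal matches the paper's approach, which simply asserts that the lemma ``is easily verified using a computer algebra system''; your Steps~2 and~3 correctly spell out the finite set of strict inequalities (angular monotonicity and positivity of every consecutive oriented area) that certify strict convexity and the claimed cyclic ordering, and your remark about guard digits or interval arithmetic is exactly the right way to make such a numerical check rigorous. One cosmetic note: since $v_{17},\dots,v_{32}$ are defined as the negatives of $v_1,\dots,v_{16}$, the polygon in question actually has $32$ vertices rather than $36$ (the ``$36$'' in the statement is a leftover from an earlier $18\times 2$ construction mentioned in the acknowledgements), but this inherited typo does not affect the validity of your argument.
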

    The lemma is easily verified using a computer algebra system. The polygon $S$ is shown in \cref{f.polygon}. The largest interior angle is approximately $ 175.8 ^\circ$ at the vertices $v_{7},v_{23}$.

\begin{figure}[ht]
\centering
\includegraphics[width=\textwidth]{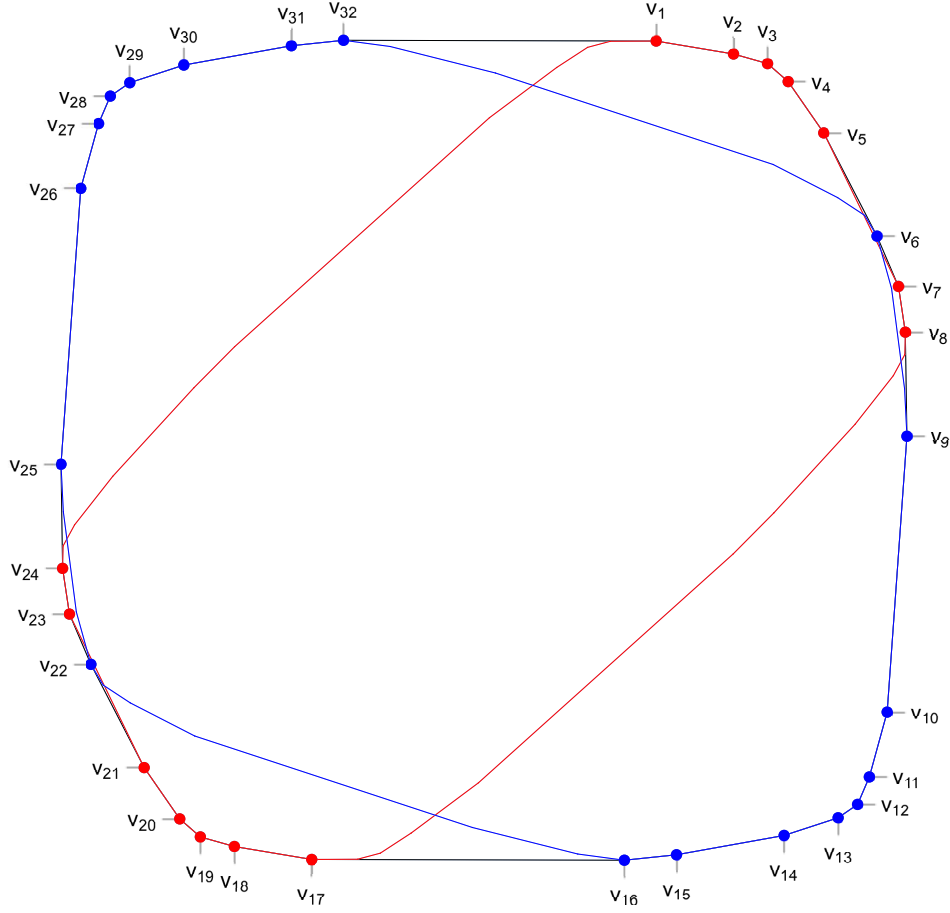}
\caption{The polygon $S$ and its images $A(S)$ (red) and $B(S)$ (blue).}\label{f.polygon}
\end{figure}

\begin{lemma}\label{l.images}
Given a vertex $v_i$, its image $Av_i$ (resp.\ $Bv_i$) either is another vertex $v_j$ or it belongs to the interior of $S$.
\end{lemma}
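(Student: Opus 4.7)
The plan is a direct finite verification. By construction of the vectors $v_1,\dots,v_{16}$ via the graph \eqref{e.graph}, every edge $v_i \xrightarrow{M} v_j$ (with $M \in \{A, -A, B, -B\}$) encodes an identity $M v_i = v_j$; for such pairs $(v_i, M)$ the image is a vertex of $S$ and there is nothing to check. It only remains to handle the pairs not prescribed by the graph.

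First, I would halve the work via central symmetry. The polygon $S$ is centrally symmetric about the origin, since the vertex set is closed under $v \mapsto -v$ by the construction of $v_{17},\dots,v_{32}$. Combined with linearity, $M(-v_i) = -M v_i$, so $-M v_i$ is a vertex of $S$ (respectively, lies in $\interior S$) if and only if $M v_i$ is. It therefore suffices to verify the claim for one representative of each antipodal pair of vertices and for $M \in \{A, B\}$.

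For each such remaining case, I would explicitly compute the image vector $M v_i$ and certify that it lies in $\interior S$. A convenient certificate is to verify, for each edge $e$ of $S$, the strict half-plane inequality $\langle n_e, M v_i \rangle < c_e$, where $n_e$ and $c_e$ are an outward normal and offset determining $e$; equivalently, one exhibits a strict convex combination of the $v_j$'s that represents $M v_i$. Each such inequality is a polynomial relation in the entries of $A$, $B$, and $v_i$ that can be evaluated from the data in \eqref{e.matrices} on any computer algebra system.

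The main obstacle is not conceptual but organizational: several dozen image points must be tested against several dozen half-planes, so the verification is best delegated to a machine. The key point is that, because the example is produced by the polytope algorithm with balancing, the strict inequalities hold with a uniform positive slack. This slack is precisely what will later allow the construction to be perturbed, delivering the open set $\mathcal{U}$ in \cref{t.main}.
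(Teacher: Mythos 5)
Your proposal is correct and matches the paper's approach: the paper likewise states that the lemma is verified by direct computation on a computer algebra system, and notes that the tightest margin is the image $Av_4$ (or its antipode), roughly $7.6\times 10^{-4}$ from $\partial S$. One small caveat on your closing remark: the uniform positive slack that permits perturbation follows simply because there are finitely many strict polynomial inequalities, not because the balancing procedure was used; balancing merely makes the example \emph{exist} (it tunes the relative lengths of $v_4$ and $v_9$ so that an invariant polytope is possible at all), but once the strict inequalities of \cref{l.convex} and \cref{l.images} hold, continuity alone gives the open neighborhood.
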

This lemma can also be verified using a computer. The closest call is the image $Av_{4}$ (or its opposite), which is approximately $7.6*10^{-4}$
away from the boundary of $S$.

The lemma implies that $AS\subseteq S$ and $BS\subseteq S$. 
In particular, any product $\Pi$ of $A$'s and $B$'s satisfies $\Pi(S) \subseteq S$, and therefore the norm $\| \Pi \|$ is bounded by a constant. That is, the pair $\{A,B\}$ is \emph{product bounded}. Hence, $\JSR(A,B) \le 1$.
On the other hand, $\JSR(A,B) \ge \rho(A^2BAB^2)=1$. Therefore, $\JSR(A,B)=1$ and so the products \eqref{e.SC_pair} are SMPs.
The next step is to prove that they are (essentially) unique.

\begin{lemma}\label{l.infinite_word}
Consider a sequence of matrices $(A_k)_{k\ge 1}$ composed of $A$'s and $B$'s, and let $\Pi_k \coloneqq A_k \cdots A_1$. Then:
\begin{enumerate}[label={\normalfont({\alph*})},ref={\normalfont{\alph*}}]
\item\label{i.contract} 
either $\Pi_k(S)$ is contained in the interior of $S$ for some $k$,
\item\label{i.periodic} 
or the sequence $(A_k)$ is periodic of period $6$, and $\Pi_6$ is a cyclic permutation of either $A^2 BA B^2$ or $B^2 AB A^2$.
\end{enumerate}
\end{lemma}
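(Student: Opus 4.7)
The strategy is to track the orbit of each polygon vertex $v_i$ under the linear maps $\Pi_k$ and show that, once case (a) is excluded, the itinerary $(A_k)$ is forced to follow one of two explicit closed loops visible in \eqref{e.graph}. First I would verify that $A$ and $B$ are invertible, which is immediate from $\det A_0 \neq 0$ and $\det B_0 \neq 0$ on inspection of \eqref{e.the_example}. As homeomorphisms of $\mathbb{R}^2$ satisfying $AS, BS \subseteq S$, both $A$ and $B$ send $\interior(S)$ into itself; hence membership in $\interior(S)$ is \emph{absorbing}: once $\Pi_k v \in \interior(S)$, so is every $\Pi_m v$ for $m \geq k$. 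Since $S = \operatorname{conv}\{v_1,\dots,v_{36}\}$ and $\Pi_k$ is linear, $\Pi_k(S) \subseteq \interior(S)$ iff every $\Pi_k v_i \in \interior(S)$; and a short induction using \cref{l.images} shows each $\Pi_k v_i$ is either a vertex of $S$ or lies in $\interior(S)$, never on an open edge. Set $I(k) \coloneqq \{i : \Pi_k v_i \text{ is a vertex of } S\} \subseteq \{1,\ldots,36\}$; by the absorbing property $I(k)$ is non-increasing. If (a) fails then $I(k) \neq \emptyset$ for every $k$, so the sequence stabilizes at a nonempty $I_\infty$, and by monotonicity every $i \in I_\infty$ satisfies $\Pi_k v_i \in \{v_1,\dots,v_{36}\}$ for \emph{every} $k \geq 0$.

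Fix such an $i$. Then $(\Pi_k v_i)_{k \geq 0}$ is an infinite walk in the finite labeled digraph $G$ whose nodes are $v_1,\dots,v_{36}$ and whose labeled edges $v \xrightarrow{X} w$ record $Xv = w$ for $X \in \{A,B\}$. This digraph is precisely \eqref{e.graph} together with its antipodal copy, since $X(-v) = -Xv$; the $\pm$ signs on the arrows in \eqref{e.graph} simply encode the identifications $v_{i+16} = -v_i$. Reading off \eqref{e.graph}, $G$ consists of two cycles (of length $6$ modulo antipodes), a few ``branch'' vertices on these cycles ($v_5$, $v_6$, $v_7$ and antipodes) each carrying a single additional outgoing arrow to a ``leaf'' vertex ($v_3, v_{12}, v_{14}, v_{15}$ and antipodes), and nothing else; crucially, the leaves have \emph{no} outgoing arrows. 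Consequently, any infinite walk in $G$ must sit on a single cycle from step one, producing a period-$6$ label sequence. Reading the labels around the two cycles yields $\Pi_6 \in \{A^2BAB^2,\, B^2ABA^2\}$, and different starting vertices on the same cycle produce cyclic permutations of the same period word; this is conclusion (b).

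The only delicate step is the graph-structure claim in the previous paragraph --- namely that $G$ really has the described form, in particular that the four leaves are genuine sinks and that no vertex-to-vertex arrows have been omitted from \eqref{e.graph}. This is a finite numerical verification on the images of the $36$ explicit vertices under $A$ and $B$, closely analogous to (and no harder than) the check behind \cref{l.images}.
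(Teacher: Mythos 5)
Your proof is correct and follows essentially the same route as the paper's: reduce to tracking a single vertex whose orbit stays on the vertex set, then observe that an infinite labeled walk in the graph \eqref{e.graph} is forced around one of the two $6$-cycles. The only cosmetic difference is that you replace the paper's pigeonhole step with the monotone family $I(k)$ and make the interior-absorption (via invertibility of $A,B$) explicit, which the paper leaves implicit; both of these are minor improvements in rigor rather than a different argument.
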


\begin{proof}
Assume that alternative \eqref{i.contract} does not hold, that is, for all $k$, the image $\Pi_k(S)$ is not contained in the interior of $S$. It follows from \cref{l.images} that for each $k \ge 1$ there must be at least one vertex $v_{i_k}$ that maps to another vertex $v_{j_k}$, i.e.,
\begin{equation}
\Pi_k(v_{i_k})=v_{j_k} \, .
\end{equation}
By the pigeonhole principle, some vertex $v_{m_0}$ appears infinitely often in the sequence $(v_{i_k})_{k\ge 1}$. Recall from  \cref{l.images} that once a vertex is mapped to an interior point, it must stay in it under any product of $A$ and $B$. Hence for all $k$, $\Pi_k(v_{m_0})$ must be another vertex of $S$, say,
\begin{equation}
\Pi_k(v_{m_0})=v_{m_k} \, .
\end{equation}
Equivalently, $v_{m_k} = A_k (v_{m_{k-1}})$ for all $k \ge 1$.
Replacing some vertices by their opposites if necessary, what we obtain is an infinite path in the graph \eqref{e.graph}. However, every such path consists of winding around one of the two cycles. It follows that alternative \eqref{i.periodic} in the \lcnamecref{l.infinite_word} holds.
\end{proof}

The lemma has the following simple corollary:
\begin{lemma}\label{l.unique}
Let $\Pi$ be a  primitive product of $A$'s and $B$'s that is not a cyclic permutation of either $A^2 BA B^2$ or $B^2 AB A^2$. Then, $\Pi^k(S)$ is contained in the interior of $S$ for some $k$.
\end{lemma}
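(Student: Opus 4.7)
The plan is to apply \cref{l.infinite_word} to the one-sided infinite sequence obtained by repeating the letters of $\Pi$ periodically. Writing $\Pi = A_m \cdots A_1$ with each $A_i \in \{A,B\}$, I would extend this to a sequence $(A_k)_{k \ge 1}$ of period $m$. The corresponding partial products satisfy $\Pi_{jm} = \Pi^j$, so once I can force alternative \eqref{i.contract} of \cref{l.infinite_word} to hold, there exists some $k$ with $\Pi_k(S) \subseteq \interior(S)$, and the goal reduces to producing an integer $j$ with $\Pi^j(S) \subseteq \interior(S)$.

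To propagate from $\Pi_k(S) \subseteq \interior(S)$ to $\Pi^j(S) \subseteq \interior(S)$, I would use the forward-invariance of $\interior(S)$ under both $A$ and $B$. Indeed, these matrices are invertible (the matrices in \eqref{e.the_example} have nonzero determinant), hence homeomorphisms of $\mathbb{R}^2$, so they send the open set $\interior(S)$ to an open subset of $A(S) \subseteq S$ or $B(S) \subseteq S$, which must therefore lie in $\interior(S)$. Consequently $\Pi_{k'}(S) \subseteq \interior(S)$ for every $k' \ge k$, and any $j$ with $jm \ge k$ works.

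The substantive step is ruling out alternative \eqref{i.periodic}. Since $\Pi$ is primitive, the minimal period of $(A_k)_{k \ge 1}$ is exactly $m$; so if $6$ is also a period, then $m$ divides $6$. When $m = 6$, one has $\Pi_6 = \Pi$, which is excluded by hypothesis. When $m \in \{1,2,3\}$, the word $\Pi_6 = \Pi^{6/m}$ is a proper power of a shorter word, hence not primitive, whereas every cyclic permutation of $A^2 B A B^2$ or $B^2 A B A^2$ is primitive (a direct check, as neither $AABABB$ nor $BBABAA$ admits a period strictly smaller than $6$ among the divisors of $6$). Either case contradicts \eqref{i.periodic}, so \eqref{i.contract} must hold and we are done.

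I do not anticipate any serious obstacle; the argument is essentially an off-the-shelf application of \cref{l.infinite_word}, the only points requiring care being the case distinction on $m$ and the invertibility of $A$ and $B$.
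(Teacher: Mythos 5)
Your proposal is correct and follows the same plan as the paper's proof: extend $\Pi$ periodically to a one-sided infinite sequence and apply \cref{l.infinite_word}. You flesh out two steps that the paper leaves terse. In alternative~\eqref{i.contract} the paper writes ``nothing to prove,'' while you supply the bridge from $\Pi_k(S) \subseteq \interior S$ to $\Pi^j(S) \subseteq \interior S$ via forward-invariance of $\interior S$ under $A$ and $B$ (since $A$ is invertible, $A(\interior S) = \interior(A(S)) \subseteq \interior S$, and likewise for $B$); this is indeed needed and worth making explicit. In alternative~\eqref{i.periodic} the paper observes $\Pi^6 = C^\ell$ where $C=\Pi_6$ and appeals to uniqueness of primitive roots to conclude $\ell=6$ and $\Pi=C$, whereas you argue that the minimal period $m$ of the sequence divides $6$, rule out $m\in\{1,2,3\}$ because then $\Pi_6$ would be a proper power (hence not primitive, unlike every cyclic permutation of $A^2BAB^2$), and note that $m=6$ forces $\Pi=\Pi_6$, contradicting the hypothesis. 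The two treatments of case~\eqref{i.periodic} are equivalent in substance; yours is somewhat more self-contained, the paper's a bit slicker.
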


\begin{proof}
Given a primitive word $\Pi$ of length $\ell$, write it as $\Pi = A_\ell \cdots A_1$ where each $A_i$ is either $A$ or $B$. Now extend $(A_k)_{k=1}^\ell$ to a periodic sequence $(A_k)_{k\ge 1}$ of period $\ell$, and apply \cref{l.infinite_word} to this sequence.
If case \eqref{i.contract} holds, then we have nothing to prove.
So, assume we we are in case \eqref{i.periodic}, that is,  the sequence  $(A_k)_{k\ge 1}$ also has period $6$, and $C \coloneqq A_6 \cdots A_1$ is a cyclic permutation of either $A^2 BA B^2$ or $B^2 AB A^2$. In particular, $\Pi^6 = C^\ell$.
Since both $\Pi$ and $C$ are primitive, it follows from uniqueness of degrees and roots (see \cite{roots}) that 
$\ell = 6$ and $\Pi = C$.
\end{proof}

\cref{l.unique} shows that any primitive $\Pi$ that is not a cyclic permutation of one of the two products \eqref{e.SC_pair} 
has $\rho(\Pi)<1$ and so it cannot be a SMP for the family $\{A,B\}$.
This proves uniqueness. 
So, coming back to the original pair $(A_0,B_0)$ defined in \eqref{e.the_example}, we see that it has SMPs \eqref{e.SC_pair}, and no other SMPs other than their cyclic permutations. The same conclusions remain valid if the matrices are slightly perturbed. Indeed, the vertices $v_i$ will move continuously, so \cref{l.convex,l.images} still hold. Furthermore, \cref{l.infinite_word,l.unique} are just abstract consequences, so they also persist. 
The proof of \cref{t.main} is concluded.

\subsection{Additional information}\label{ss.additional}

\cref{f.polygon} indicates that our example is delicate. 
In fact, the normalized spectral radii of the product $A^3 B A^2 B$ is $0.99936$, and it turns out that if we add $0.005$ to the $(2,1)$ entry of the matrix $B$, then this product becomes the unique SMP (modulo cyclic permutations, of course).
The set $\mathcal{U}$ we have found in our proof of \cref{t.main} seems to have a quite small inradius.

Let us mention a few direct extensions of our result:
\begin{itemize}
\item  \cref{t.main} obviously holds for $n$-tuples $(A_1,\dots,A_n)$ of $2 \times 2$ real matrices, for any $n \ge 2$; 
indeed, if the other $n-2$ matrices are sufficiently small, then they do not affect the SMPs.  More precisely, if $A_1=A$, $A_2=B$, and all the other matrices satisfy the following inequality:
\begin{equation}
\|A_i\|_S< \JSR(A,B) \quad (3\le i \le n),
\end{equation}
where $\|\mathord{\cdot} \|_S$ is the norm induced by the polygon $S$, then the SMPs remain unchanged.

\item For the matrices $A,B$ we defined, the pair of products \eqref{e.SC_pair} 
is actually \emph{dominant} in the sense of Guglielmi and Protasov \cite[p.~25]{algo2}.

\item It follows directly from \cref{l.infinite_word} that the \emph{Mather set} (see  Morris \cite{Morris_Mather}) of our pair of matrices is the union of two periodic orbits. This means that there are no other maximizing measures apart from those two coming from the periodic orbits corresponding to \eqref{e.SC_pair}.
\end{itemize}

\medskip

Now let us see why our construction implies non-uniqueness of Barabanov norms:

\begin{proof}[Proof of \cref{c.Barabanov}]
By the theorem of Dranishnikov--Konyagin--Protasov \cite{Protasov96}, every irreducible family  $\mathcal{A}$ with unit JSR admits a centrally symmetric convex body $C$ in $\mathbb{R}^d$ with the property that the closed convex hull of $\bigcup_{A \in \mathcal{A}} A(C)$ equals $C$.
Specializing to the pair $(A,B) \in \mathcal{M}_2(\mathbb{R})^2$ defined in \eqref{e.matrices}, the polygon $S$ introduced before has this property.
It follows from a duality result of Plischke and Wirth \cite{PW} that the norm $\tribar{\mathord{\cdot}}$ in $\mathbb{R}^2$ whose unit ball is the polar of the polygon $S$ is a Barabanov norm for the pair of transpose matrices $(A^\mathtt{t},B^\mathtt{t})$.
Recall that the eigenvectors \eqref{e.eigenvectors} were chosen so that the ratio of their lengths is exactly $0.885$ (which is computed using the balancing algorithm, and therefore is optimal in the sense explained in \cite{algo2}). However, any sufficiently close ratio would have worked, leading to a slightly different (non-similar) polygon $S$, and therefore to a non-homothetic Barabanov norm for  $(A^\mathtt{t},B^\mathtt{t})$. As explained before, if the matrices are perturbed, the polygons can be adjusted accordingly. So non-uniqueness of Barabanov norms holds on an open set of pairs.
\end{proof}

\subsection{Complex matrices}

Our main theorem holds true for complex matrices as well:

\begin{theorem}\label{t.complex}                          
There exists a nonempty open set $\mathcal{U}$ in $\mathcal{M}_2(\mathbb{C})^2$ such that for all $(A,B) \in \mathcal{U}$, the products $A^2BAB^2$ and $B^2ABA^2$ are spectrum maximizing. Furthermore, there are no other SMPs other than their cyclic permutations.
\end{theorem}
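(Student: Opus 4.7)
The plan is to transfer the proof of \cref{t.main} to the complex setting by replacing the real invariant polygon $S\subset\mathbb{R}^2$ with a balanced complex polytope $P\subset\mathbb{C}^2$. Starting with the same real matrices $(A,B)$ from \eqref{e.matrices}, now regarded as elements of $\mathcal{M}_2(\mathbb{C})$, define
\[
P \coloneqq \Bigl\{\sum_{j=1}^{16}\lambda_j v_j : \lambda_j\in\mathbb{C},\ \sum_{j=1}^{16}|\lambda_j|\le 1\Bigr\} \subset \mathbb{C}^2,
\]
the closed absolutely convex hull over $\mathbb{C}$ of the same sixteen vectors used in the proof of \cref{t.main}. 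Then $P$ is the closed unit ball of a complex norm $\|\mathord{\cdot}\|_P$ on $\mathbb{C}^2$, and comparing Minkowski functionals gives $\|v\|_P\le\|v\|_S$ for every $v\in\mathbb{R}^2$. In particular $\interior_{\mathbb{R}}(S)\subseteq\interior_{\mathbb{C}}(P)$, where $\interior_{\mathbb{C}}(P)$ denotes the interior of $P$ as a subset of $\mathbb{C}^2\cong\mathbb{R}^4$.

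To verify $AP\subseteq P$ (and likewise $BP\subseteq P$), I would use $\mathbb{C}$-linearity of $A$ together with the fact that $P$ is the absolutely convex hull of $\{v_1,\dots,v_{16}\}$: it suffices to check $Av_i\in P$ for each $i$, which holds by \cref{l.images} since each image is either $\pm v_j$ (a generator of $P$) or lies in $\interior_{\mathbb{R}}(S)\subseteq\interior_{\mathbb{C}}(P)\subseteq P$. This yields $\JSR(A,B)\le 1$ over $\mathbb{C}$; combined with $\rho(A^2BAB^2)=1$ we obtain equality, so the two products remain SMPs.

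Next I would transfer the uniqueness argument. The key dichotomy behind \cref{l.infinite_word} now reads: for every product $\Pi_k$ of $A$'s and $B$'s and every generator $v_i$, either $\Pi_k v_i=\pm v_j$ for some $j$, or $\Pi_k v_i\in\interior_{\mathbb{C}}(P)$. This propagates from the single-step case of \cref{l.images} by induction, using that the invertible $\mathbb{C}$-linear maps $A$ and $B$ preserve $\interior_{\mathbb{C}}(P)$. If no $\Pi_k(P)$ is contained in $\interior_{\mathbb{C}}(P)$, then some generator must map to $\partial P$ at every step, and the pigeonhole argument over the finitely many generators forces an infinite path in the graph \eqref{e.graph}, which must wind around one of its two cycles, reproducing the conclusion of \cref{l.infinite_word,l.unique}.

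Robustness under complex perturbations $(A',B')\in\mathcal{M}_2(\mathbb{C})^2$ proceeds exactly as in the real case: redefine the generators $v_i'\in\mathbb{C}^2$ as leading eigenvectors of the perturbed products $(A')^2B'A'(B')^2$ and $(B')^2A'B'(A')^2$ (which still share eigenvalues by \cref{l.SC}) and propagate them along the edges of the graph \eqref{e.graph}, then form $P'$ as the absolutely convex hull of these vectors. The generator relations $A'v_i'=\pm v_j'$ are preserved by construction, while the interior alternative $A'v_i'\in\interior_{\mathbb{C}}(P')$ amounts to the open inequality $\|A'v_i'\|_{P'}<1$. The main obstacle is quantitative: one must confirm that the geometric margin enjoyed by the unperturbed example (closest call about $7.6\times 10^{-4}$ from $\partial S$) is not consumed when passing to the complex norm and the enlarged complex-perturbation space. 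Since $\|\mathord{\cdot}\|_P\le\|\mathord{\cdot}\|_S$ on $\mathbb{R}^2$ the margin only improves, and continuity in $(A',B')$ then yields a nonempty open set $\mathcal{U}\subset\mathcal{M}_2(\mathbb{C})^2$ with the required properties.
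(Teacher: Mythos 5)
Your proof is correct and follows essentially the same strategy as the paper's: replace the real polygon $S$ by the balanced complex polytope obtained as the absolute convex hull of the generators propagated along the graph~\eqref{e.graph}, show that this set is invariant under the (possibly perturbed) pair, and transfer the dichotomy of \cref{l.images} so that the uniqueness argument of \cref{l.infinite_word,l.unique} carries over. The one place you diverge in detail is the treatment of the ``interior'' alternative under perturbation: the paper slices the real polygon into triangles $T_k$ and solves a linear system $\widetilde{A}\tilde{v}_j = \widetilde{\alpha}\tilde v_k + \widetilde{\beta}\tilde v_{k+1}$, noting that $|\widetilde\alpha|+|\widetilde\beta|<1$ persists for small perturbations, whereas you argue directly via continuity of the Minkowski functional $\|\mathord{\cdot}\|_{P'}$ in the generators and the matrices. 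Both routes are sound; yours is marginally cleaner to state, and you also spell out the carry-over of the uniqueness step (pigeonhole on boundary generators, winding around the two cycles of the graph), which the paper leaves implicit after establishing $\JSR(\widetilde A,\widetilde B)=1$. One small remark: since the generators $v_j$ are real and $P\cap\mathbb{R}^2=S$, the Minkowski functionals actually agree on $\mathbb{R}^2$, $\|v\|_P=\|v\|_S$; so the geometric margin is preserved exactly rather than ``improved,'' but this does not affect your conclusion.
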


Before giving the proof, we need to recall a few notions (see \cite{absco} for further information).

\begin{definition}
A set $S\subseteq \mathbb{C}^2$ is \emph{absolutely convex} if it is convex and it is invariant under multiplication by complex numbers of modulus $1$.
\end{definition}

\begin{definition}
The \emph{absolute convex hull} of a finite set $F\subseteq \mathbb{C}^2$, denoted $\operatorname{absco}(F)$, is the smallest absolutely convex set that contains $F$.
\end{definition}
In other words, the absolute convex hull of a set is the intersection of all absolutely convex sets that contain it. 
It can be also be expressed as:
\begin{equation}\label{e.absco}
\operatorname{absco}(S)=\left\{\sum_{i=1}^{n} \lambda_iv_i :  v_1,\dots,v_n \in S, \ \lambda_1, \dots,\lambda_n \in \mathbb{C}, \ \sum_{i=1}^{n} |\lambda_i| \leq 1\right\} \, .
\end{equation}

\begin{proof}[Proof of \cref{t.complex}]
We follow the same strategy as in the proof of \cref{t.main} but with slight modifications.
Let $A$ and $B$ be our real matrices \eqref{e.matrices} and let $\widetilde{A}$ and $\widetilde{B}$ be complex perturbations. Without loss of generality, assume that these perturbations are normalized so that $\rho(\widetilde{A}^2\widetilde{B}\widetilde{A}\widetilde{B}^2)=1$. We can proceed with the same process of creating 32 vectors $\tilde{v}_i,\dots, \tilde{v}_{32}$, following the same graph \eqref{e.graph}, just as in the real case. Let $T$ be the absolute convex hull of the points $\tilde{v}_i$.
Note that the starting vectors $\tilde{v}_4$ and $\tilde{v}_9$
are not uniquely determined. However, the indeterminacy is given by multiplication by complex numbers of modulus $1$, and therefore does not influence the set $T$.

We claim that $\widetilde{A}T\subseteq T$ and $\widetilde{B}T\subseteq T$. By linearity, it is sufficient to show that $\widetilde{A}\tilde{v}_j\in T$ and $\widetilde{B}\tilde{v}_j\in T$ for all $j$. 
We will consider images under $\tilde A$ only, as the case of $\tilde B$ is entirely analogous.

 Recall from \cref{l.images} that there exist no vectors of the form $Av_j$ or $Bv_j$ (with $v_j$ being a vertex of the real polygon $S$) lying on the boundary of the polygon without being a vertex $v_i$. 
If $A v_j = v_i$, then we must have $\widetilde{A} \tilde{v}_j = \tilde{v}_i$, since the perturbed vectors are defined using the same graph. In particular, $\widetilde{A} \tilde{v}_j$ belongs to the set $T$, as we wanted to show.  

If $Av_j$ is not a vertex of $S$, then $Av_j\in \operatorname{int}(S)$. We can slice our convex centrally symmetric polygon $S$ into triangles $T_k$ with vertices $(0,0),v_k,v_{k+1}$, where indices are meant modulo $32$. Fix a slice $T_k$ containing $Av_j$. Then there are unique numbers $\alpha \ge 0$, $\beta \ge 0$ such that 
\begin{equation}
Av_j = \alpha v_{k} +\beta v_{k+1}
\end{equation}
and $\alpha+\beta<1$.
As long as the perturbations are small enough, we know there must also be a unique real solution $(\widetilde{\alpha},\widetilde{\beta})$ to the linear system:
\begin{equation}
\widetilde{A}\tilde{v}_j=\widetilde{\alpha}\tilde{v}_{k}+\widetilde{\beta}\tilde{v}_{k+1} \, ,
\end{equation}
and this solution will satisfy $|\widetilde{\alpha}|+|\widetilde{\beta}|<1$.
 By \eqref{e.absco}, $\widetilde{A}\tilde{v}_j\in T$, as we wanted to prove.

We have shown that $\widetilde{A}T\subseteq T$ and $\widetilde{B}T\subseteq T$. It follows that $\JSR(\widetilde{A},\widetilde{B})\leq 1$. Since $\rho(\widetilde{A}^2\widetilde{B}\widetilde{A}\widetilde{B}^2)=1$, we have, just as in the real case, $\JSR(\widetilde{A},\widetilde{B})=1$, as desired.
\end{proof}

\section{On the coincidence of eigenvalues}\label{s.algebra}

In this section, we will see that \cref{l.SC} is not an algebraic accident. 

\subsection{Fricke polynomials}
 
Let $\mathbb{F}^+_2$ 
denote the free semigroup on generators $a$, $b$, that is, the set of all finite words in the letters $a$, $b$ 
with the operation of concatenation. 
Let $\mathcal{M}_d(\mathbb{C})$ denote the semigroup of complex $d \times d$ matrices. 
If $w \in \mathbb{F}^+_2$ and 
$A,B \in \mathcal{M}_d(\mathbb{C})$, 
then we denote by $w(A,B)$ the matrix product obtained by replacing the letters $a$, $b$ in the word $w$  by the matrices $A$, $B$. That is, $w \mapsto w(A,B)$ is the unique homomorphism from $\mathbb{F}^+_2$ to $\mathcal{M}_d(\mathbb{C})$ that maps $a$ to $A$ and $b$ to $B$.

\begin{proposition}\label{p.Fricke}
For every $w \in \mathbb{F}^+_2$, there exists a unique polynomial $F_w$ in five variables such that, for all $2 \times 2$ matrices $A$, $B$, we have
\begin{equation}\label{e.Fricke_id}
\tr w(A,B) = F_w(\tr A,\tr B,\tr AB,\det A,\det B) \, .
\end{equation}
Furthermore, $F_w$ has integer coefficients.
\end{proposition}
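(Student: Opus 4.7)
The plan is to prove uniqueness first, then existence by induction on word length.

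For uniqueness, I observe that the evaluation map $\varphi \colon \mathcal{M}_2(\mathbb{C})^2 \to \mathbb{C}^5$, $(A,B)\mapsto(\tr A,\tr B,\tr AB,\det A,\det B)$, has image containing a Zariski-dense subset: given a generic target $(t_1,\dots,t_5)$, take $A$ to be the companion matrix $\bigl(\begin{smallmatrix} t_1 & -t_4 \\ 1 & 0\end{smallmatrix}\bigr)$, which already realizes $\tr A = t_1$ and $\det A = t_4$, and then solve for the entries of $B$ matching $\tr B$, $\det B$, and $\tr AB$—three constraints in four unknowns, which after fixing one free entry reduces to a single quadratic over $\mathbb{C}$ and has solutions for generic parameters. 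Density of the image forces any polynomial in five variables vanishing on the five invariants over $\mathcal{M}_2(\mathbb{C})^2$ to be identically zero, yielding uniqueness of $F_w$.

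For existence, I induct on $|w|$, formally extending the indexing to the empty word with $F_\epsilon = 2$ (matching $\tr I = 2$). The base cases $|w|\le 2$ are immediate: $F_a = x_1$, $F_b = x_2$, $F_{ab} = x_3$, $F_{a^2} = x_1^2 - 2 x_4$, and $F_{b^2} = x_2^2 - 2 x_5$, where $x_1,\dots,x_5$ denote the five invariants. The workhorse for the inductive step is the identity
\begin{equation*}
\tr(M^2 N) = (\tr M)(\tr MN) - (\det M)(\tr N),
\end{equation*}
obtained by multiplying the Cayley--Hamilton relation $M^2 = (\tr M)M - (\det M)I$ on the right by $N$ and taking traces.

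For $|w|\ge 3$, two cases arise. If some cyclic rotation of $w$ contains a doubled letter, I rewrite that rotation as $a^2 u$ or $b^2 u$ using cyclicity of the trace, then apply the identity above to express $\tr w(A,B)$ as an integer-coefficient combination of $\tr(au)$ (or $\tr(bu)$) and $\tr u$, both of strictly smaller length, so the induction hypothesis produces the required Fricke polynomials. The only words of length $\ge 3$ with no doubled letter in any cyclic rotation are the pure even-length alternations $(ab)^k$ and $(ba)^k$ with $k \ge 2$; indeed, any odd-length alternating word such as $(ab)^k a$ has the cyclic shift $a(ab)^k$, which begins with a doubled letter. For the remaining pure-alternation case, I apply Cayley--Hamilton directly to $M = AB$ (so that $\tr M = x_3$ and $\det M = x_4 x_5$) to obtain the Chebyshev-like recurrence
\begin{equation*}
\tr M^{k+1} = x_3 \,\tr M^k - x_4 x_5 \,\tr M^{k-1}, \qquad \tr M^0 = 2, \quad \tr M^1 = x_3,
\end{equation*}
which gives $\tr((AB)^k)$ as a polynomial in $x_3, x_4, x_5$ with integer coefficients.

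The main obstacle is essentially bookkeeping: isolating the pure even-alternating words as the unique exceptional family where the cycle-and-reduce step stalls, and disposing of them separately via Cayley--Hamilton applied to $AB$. Integrality of $F_w$ is preserved automatically throughout the induction, because every reduction identity used has integer coefficients.
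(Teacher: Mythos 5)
Your proof is correct but takes a genuinely different route on the existence side. The paper applies the Cayley--Hamilton relations $A^2 = (\tr A)A - (\det A)I$ and $B^2 = (\tr B)B - (\det B)I$ together with the ``multilinear'' identity $AB + BA = (\tr B)A + (\tr A)B - (\tr A\tr B - \tr AB)I$ to reduce the \emph{matrix} $w(A,B)$ to an integer-polynomial combination of $I$, $A$, $B$, $AB$, and then takes traces. You work directly with traces: cyclically rotate $w$ so that a doubled letter stands at the front, apply $\tr(M^2N) = (\tr M)\tr(MN) - (\det M)\tr N$ to decrease the length, and dispose of the exceptional even-length alternations $(ab)^k$ --- correctly identified as the only cyclic binary words of length $\geq 3$ with no adjacent repeat --- via Cayley--Hamilton applied to $M = AB$ using $\det(AB) = \det A\,\det B$. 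Both are valid inductions preserving integrality throughout; the paper's version yields the slightly stronger statement that the matrix itself lies in the integer-polynomial span of $I$, $A$, $B$, $AB$ (useful in other contexts), whereas yours sidesteps the multilinear Cayley--Hamilton identity entirely. For uniqueness, both proofs exhibit realizable $5$-tuples: the paper takes $A$ upper-triangular and $B$ lower-triangular, while you take $A$ a companion matrix and solve a quadratic for $B$. In fact, fixing $b_{21}=1$ in your setup yields the quadratic $b_{11}^2 - (t_1+t_2)\,b_{11} + (t_3+t_4+t_5) = 0$, which always has a complex root, so the invariant map is surjective onto all of $\mathbb{C}^5$; the restriction to ``generic'' parameters in your argument is unnecessarily conservative.
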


This fact 
(or actually, its more general version for the free \emph{group} $\mathbb{F}_2$)
is known since the 19th century: see \cite{Horowitz,Magnus,Goldman}.
Nevertheless, for the convenience of the reader, let us give a proof, essentially following an observation of Sylvester~\cite{Sylvester}:

\begin{proof}
Let us 
write 
\begin{equation}\label{e.5_var}
x = \tr A, \quad
y = \tr B, \quad
z = \tr AB, \quad
u = \det A,\quad 
v = \det B. 
\end{equation}
By the Cayley-Hamilton (CH) theorem,
\begin{equation}
A^2 - xA + uI = 0, \quad
B^2 - yB + vI = 0.
\end{equation}
Also, by the multilinear CH theorem \cite[eq.~(4)]{Formanek} (itself a direct consequence of the regular CH theorem),
\begin{equation}
AB + BA - yA - xB + (xy-z)I = 0 .
\end{equation}
By using these three equations repeatedly, we can express any product $w(A,B)$ of $A$'s and $B$'s  as a linear combination of the four matrices $I$, $A$, $B$, $AB$, with the coefficients being polynomials with integer coefficients in the variables $x$, $y$, $z$, $u$, $v$.
Since the trace is linear, existence  follows.

We now prove uniqueness. Given any point $(x,y,z,u,v)$ in $\mathbb{C}^5$, we can  find matrices $A$ and $B \in \mathcal{M}_2(\mathbb{C})$ such that \eqref{e.5_var} holds; for example, an easy calculation shows that we can take $A$ to be upper-triangular and $B$ lower-triangular. 
Now, given a word $w \in \mathbb{F}^+_2$, we can use identity \eqref{e.Fricke_id} to compute the value $F_w(x,y,z,u,v)$. This shows that the the polynomial function $F_w$ is unique.
\end{proof}

We call $F_w(x,y,z,u,w)$ a \emph{Fricke polynomial}. When determinants are set to $1$, we get the \emph{reduced Fricke polynomials} 
$f_w(x,y,z) \coloneqq F_w(x,y,z,1,1)$, which are actually more common in the literature. Let us note that $F_w$ is uniquely determined by $f_w$.
Indeed, since the trace and determinant are homogeneous functions on $\mathcal{M}_2(\mathbb{C})$ of respective degrees $1$ and $2$, it follows that every polynomial $F_w(x,y,z,u,v)$ is weighted homogeneous with weights 
$(1,0,1,2,0)$ and also with weights
$(0,1,1,0,2)$.

While the proof of \cref{p.Fricke} suggests a method for the computation of the Fricke polynomials, it is not the most efficient one. See the references mentioned above for better algorithms; see also \cite{Jorgensen} for a general formula and \cite{Chas} for an online calculator of Fricke polynomials.

\subsection{Isospectral products and chiral pairs}\label{ss.isospectral}

\begin{definition}\label{isospectral}
Let $d\ge 2$ be an integer. We say that two words $w_1,w_2$ in $\mathbb{F}^+_2$ are $d$-\emph{isospectral} if for all $A$, $B \in \mathcal{M}_d(\mathbb{C})$, the matrices $w_1(A,B)$ and $w_2(A,B)$ have the same eigenvalues, with the same multiplicities.
\end{definition}

Note that any word is $d$-isospectral to its cyclic permutations.

Specializing to dimension $d=2$, we have the following characterization:

\begin{proposition}\label{p.2-isospec}
Two words  $w_1,w_2 \in \mathbb{F}^+_2$ are $2$-isospectral if and only if they have identical Fricke polynomials: $F_{w_1} \equiv F_{w_2}$. In this case, $w_2$ must be a permutation of $w_1$.
\end{proposition}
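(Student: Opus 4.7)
The plan is to reduce $2$-isospectrality of $w_1,w_2$ to the simultaneous equality of traces and determinants of $w_i(A,B)$, and then to invoke \cref{p.Fricke} together with the weighted homogeneity remark following its proof.

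First, I would recall that two $2\times 2$ matrices share the same eigenvalues with multiplicities if and only if they share the same trace and the same determinant. Hence $w_1$ and $w_2$ are $2$-isospectral if and only if both
\begin{equation*}
\tr w_1(A,B)=\tr w_2(A,B) \quad \text{and} \quad \det w_1(A,B)=\det w_2(A,B)
\end{equation*}
hold for every $(A,B)\in\mathcal{M}_2(\mathbb{C})^2$.

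Next, I would handle the two identities separately. For the trace, \cref{p.Fricke} gives $\tr w_i(A,B)=F_{w_i}(x,y,z,u,v)$ under the substitution \eqref{e.5_var}; moreover, the uniqueness argument in the proof of \cref{p.Fricke} shows that every $(x,y,z,u,v)\in\mathbb{C}^5$ is realized by some $(A,B)$. Hence the trace identity for all $(A,B)$ is equivalent to the polynomial identity $F_{w_1}\equiv F_{w_2}$. For the determinant, multiplicativity gives $\det w(A,B) = (\det A)^{|w|_a}(\det B)^{|w|_b}$, so the determinant identity for all $(A,B)$ is equivalent to $|w_1|_a=|w_2|_a$ and $|w_1|_b=|w_2|_b$, which is exactly the statement that $w_2$ is a permutation of $w_1$.

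The only remaining point, and the one I expect to be the slight catch, is to absorb the letter-count condition into the Fricke identity, so that $F_{w_1}\equiv F_{w_2}$ alone suffices. For this I would use the weighted homogeneity noted just after the proof of \cref{p.Fricke}: $F_w$ is weighted homogeneous of degree $|w|_a$ with weights $(1,0,1,2,0)$ and of degree $|w|_b$ with weights $(0,1,1,0,2)$. Therefore $F_{w_1}\equiv F_{w_2}$ already forces $|w_1|_a=|w_2|_a$ and $|w_1|_b=|w_2|_b$, which closes the determinant identity automatically and simultaneously yields the permutation claim.
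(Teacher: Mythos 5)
Your proof is correct, and its overall skeleton is the same as the paper's: reduce $2$-isospectrality to the conjunction of a trace identity and a determinant identity, identify the trace identity with $F_{w_1}\equiv F_{w_2}$ via \cref{p.Fricke} and the surjectivity of $(A,B)\mapsto(x,y,z,u,v)$, and then show that the trace identity already forces equal letter counts, which disposes of the determinant identity and yields the permutation claim simultaneously.

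The one place where you diverge from the paper is in how you extract the letter counts from $F_{w_1}\equiv F_{w_2}$. The paper simply substitutes $A=\lambda I$, $B=I$, obtaining $2\lambda^{|w_1|_a}=2\lambda^{|w_2|_a}$ identically in $\lambda$, hence $|w_1|_a=|w_2|_a$ (and symmetrically for $b$). You instead invoke the weighted homogeneity of $F_w$, reading $|w|_a$ and $|w|_b$ off as the weighted degrees for the weight systems $(1,0,1,2,0)$ and $(0,1,1,0,2)$; this is valid because $F_w$ is a nonzero polynomial (e.g.\ $F_w(2,2,2,1,1)=\tr w(I,I)=2$), and a nonzero weighted-homogeneous polynomial has a well-defined weighted degree. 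The two mechanisms are essentially equivalent in content (the paper's substitution is the $y=2$, $z=2$, $v=1$ slice of the homogeneity relation), but yours leans on the remark following \cref{p.Fricke} rather than on a direct matrix substitution, so it is a touch more abstract while the paper's is more elementary and self-contained. Either is acceptable.
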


\begin{proof}
The forward direction is clear as trace is the sum of the eigenvalues.
Now assume that the words $w_1$, $w_2$ have identical Fricke polynomials, that is, $\tr{w_1(A,B)}=\tr{w_2(A,B)}$ for all $A$, $B \in \mathcal{M}_2(\mathbb{C})$. Let $n_1$ (resp.\ $n_2$) be the number of letters $a$ in $w_1$ (resp.\ $w_2$).
By substituting $A=\lambda I$, $B=I$ we get $2\lambda^{n_1}=2\lambda^{n_2}$. This is only possible if $n_1=n_2$. The same argument shows that $w_1$ and $w_2$ must have the same number of $b$'s as well. That is, the words are permutations of one another. Therefore, for all $A$, $B \in \mathcal{M}_2(\mathbb{C})$, the matrices $w_1(A,B)$ and $w_2(A,B)$ have not only the same trace, but also the same determinant. So, they have  the same eigenvalues with the same multiplicities.
\end{proof}

Given a word $w \in \mathbb{F}^+_2$, then the \emph{reverse} or \emph{mirror image} of $w$ is the word $\widetilde{w} \in \mathbb{F}^+_2$ obtained by writing the letters of $w$ in opposite order. So $w$ is a palindrome if and only if $\widetilde{w} = w$.

\begin{proposition}\label{p.mirror}
Every word $w \in \mathbb{F}^+_2$ is $2$-isospectral to its mirror image $\widetilde{w}$.
\end{proposition}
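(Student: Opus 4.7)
The plan is to reduce the claim to showing the Fricke polynomials $F_w$ and $F_{\widetilde w}$ are identical as polynomials in five variables; by \cref{p.2-isospec}, isospectrality follows immediately. Since both $F_w$ and $F_{\widetilde w}$ are already determined by their values on all of $\mathbb{C}^5$ (and since the parametrization $(A,B)\mapsto(\tr A,\tr B,\tr AB,\det A,\det B)$ is surjective onto $\mathbb{C}^5$, as noted at the end of the proof of \cref{p.Fricke}), it is enough to check the polynomial identity
\begin{equation}
\tr w(A,B) = \tr \widetilde{w}(A,B) \quad \text{for all } A,B\in\mathcal{M}_2(\mathbb{C}).
\end{equation}

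The main idea is that transposition reverses products. First I would observe that if $w = a_{i_1}\cdots a_{i_k}$ with $a_{i_j}\in\{a,b\}$, then
\begin{equation}
w(A,B)^{\mathtt{t}} \;=\; A_{i_k}^{\mathtt{t}}\cdots A_{i_1}^{\mathtt{t}} \;=\; \widetilde{w}(A^{\mathtt{t}},B^{\mathtt{t}}),
\end{equation}
where $A_{i_j}\in\{A,B\}$. Since trace is invariant under transposition, this gives
\begin{equation}
\tr w(A,B) \;=\; \tr \widetilde{w}(A^{\mathtt{t}}, B^{\mathtt{t}}).
\end{equation}

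Next I would apply the Fricke identity \eqref{e.Fricke_id} to both sides and compare the five arguments. The traces and determinants of $A,B$ are transposition-invariant, and the remaining argument satisfies $\tr(A^{\mathtt{t}}B^{\mathtt{t}}) = \tr((BA)^{\mathtt{t}}) = \tr(BA) = \tr(AB)$ by cyclicity. Hence
\begin{equation}
F_w(x,y,z,u,v) \;=\; F_{\widetilde{w}}(x,y,z,u,v)
\end{equation}
whenever $(x,y,z,u,v)$ comes from an actual pair $(A,B)$, and therefore for all $(x,y,z,u,v)\in\mathbb{C}^5$. By the uniqueness half of \cref{p.Fricke}, $F_w\equiv F_{\widetilde{w}}$, and \cref{p.2-isospec} concludes the proof.

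I do not expect any real obstacle: the argument is a one-line manipulation once one notices that transposition interchanges $w$ with $\widetilde w$ while leaving the five Fricke parameters unchanged. The only point requiring any care is the verification that $\tr(A^{\mathtt{t}}B^{\mathtt{t}})=\tr(AB)$ (which only needs cyclicity of trace, not any $2\times 2$ special structure), and the observation that the five parameters of $(A^{\mathtt{t}},B^{\mathtt{t}})$ agree with those of $(A,B)$.
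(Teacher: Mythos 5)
Your proof is correct and follows essentially the same route as the paper: both rely on the observation that transposition reverses products (so that $w$ and $\widetilde w$ are intertwined by transposing the matrices), that trace and determinant are transposition-invariant, and that $\tr(A^{\mathtt{t}}B^{\mathtt{t}})=\tr(AB)$ by cyclicity, to conclude $F_w\equiv F_{\widetilde w}$. The only cosmetic difference is that you route the final step through \cref{p.2-isospec}, while the paper directly notes that $w(A,B)$ and $\widetilde w(A,B)$ automatically share the same determinant and so equality of traces suffices.
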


This \lcnamecref{p.mirror} is due to Southcott \cite[Theorem~6.3]{Southcott}. Let us provide a direct proof: 

\begin{proof}
For any two matrices $A$, $B$, the two matrix products $w(A,B)$ and $\widetilde{w}(A,B)$ have the same determinant, and to prove the \lcnamecref{p.mirror} we need to show that they have the same trace. 
That is, we need to show that the Fricke polynomials  $F_{\widetilde{w}}$ and $F_w$ are identical. 
Note that: 
\begin{equation}
\widetilde{w}(A,B) = [w(A^\mathtt{t},B^\mathtt{t})]^\mathtt{t} \, ,
\end{equation}
where the superscript denotes matrix transposition.
Using the fact that transposition preserves trace and determinant, we have: 
\begin{align}
\tr \widetilde{w}(A,B) 
&= \tr w(A^\mathtt{t},B^\mathtt{t}) \\
&= F_w ( \tr A^\mathtt{t}, \tr B^\mathtt{t}, \tr A^\mathtt{t} B^\mathtt{t}, \det A^\mathtt{t}, \det B^\mathtt{t}) \\
&= F_w ( \tr A, \tr B, \underbrace{\tr B A}_{\tr AB}, \det A, \det B) \\ 
&= \tr w(A,B)  \, ,
\end{align}
proving that $F_{\widetilde{w}} \equiv F_w$.
\end{proof}

\cref{p.mirror} directly implies not only \cref{l.SC} but many other similar relations. The proposition is only interesting if the mirror is \emph{not} a cyclic permutation of the original word. Thus we introduce the following definition.
\begin{definition}
We call a word $w \in \mathbb{F}^+_2$ \emph{chiral} if $\tilde{w}$ is not a cyclic permutation of~$w$.
\end{definition}
The shortest chiral words have length $6$, and are exactly $w  = a^2bab^2$, $\tilde w = b^2aba^2$, and their cyclic permutations.
Modulo cyclic permutations and interchanging the two letters, there exists only one chiral pair of length $7$, namely $a^3 b a b^2$ and $b^2aba^3$.
As the length increases, chiral words become increasingly frequent. For example, $61 \%$ of the words of length $10$ and $97 \%$ of the words of length $20$ are chiral (see \cite{OEIS}).

\subsection{Further notes}\label{ss.further}

\cref{p.mirror} is only the tip of the $2$-isospectrality iceberg.
Another way of producing $2$-isospectral words is as follows: if $w_1$ and $w_2$ are $2$-isospectral, and we substitute the letters $a$ and $b$ by any pair of words, then we obtain another pair of $2$-isospectral words. For instance,
performing the substitution $a\to ab$ and $b\to ba$ on the shortest chiral  pair
$w_1 = a^2bab^2$ and $w_2 = \widetilde{w_1} = b^2aba^2$,
we obtain the words 
\begin{equation}
abab^2a^2b^2aba \quad\text{and}\quad
baba^2b^2a^2bab \, ,
\end{equation}
which are $2$-isospectral despite not being cyclic permutations nor mirror images of one another. As remarked in \cite[\S~4.3]{LLM},
there is no shorter pair with those properties.

It seems to be a difficult problem to describe in purely combinatorial terms the relation of $2$-isopectrality on the semigroup $\mathbb{F}^+_2$. 
As for the free group $\mathbb{F}_2$, Horowitz \cite[Example 8.2]{Horowitz} proved that the $2$-isospectrality equivalence classes can be arbitrarily large. See  \cite[Conjecture~4.1]{Anderson} for a possible description of such classes.

Let us note that \cref{p.mirror} still holds
for the group $\mathbb{F}_2$.
However, it fails if three or more letters are used: for instance, $\tr(ABC) \not\equiv \tr(CBA)$.
On the other hand, we may produce non-trivial trace identities for three or more matrices using the substitution trick explained above.

As for higher dimensions, \cref{p.mirror} also fails if $d \ge 3$. No pair of $d$-isospectral words with $d\ge 3$ is known, except for cyclic permutations. In fact,  it is an open problem whether there exists a pair of words $w_1$, $w_2 \in \mathbb{F}_2^+$ that are not cyclic permutations of one another and satisfy $\tr w_1(A,B) = \tr w_2(A,B)$ for all $A$, $B \in \mathrm{SL}_3(\mathbb{C})$: see \cite[\S~4.3]{LLM}. If such words exist,  they have length bigger than $30$, according to experiments \cite{Reid}.

\section{Searching for matrix pairs with specific SMPs}\label{s.search}

\subsection{The procedure}

The example exhibited in \cref{s.example} was found by computer search.
Let us describe the procedure, which can be adapted to similar problems.

We start with a list of the Fricke polynomials of all Lyndon words on two letters up to a certain length $\ell$; we took $\ell = 14$, so $2538$ polynomials which can be computed using the algorithm described in \cite{Goldman}, for instance. After removing $2$-isospectral duplicates, we are left with $1549$ polynomials.

Instead of generating pairs of random matrices, we generate random values of the five variables $(x,y,z,u,v)$ as in \eqref{e.5_var}, with a conveniently chosen probability distribution. Specifically, we took each of the variables to be uniformly distributed on the interval $[-10,10]$, except for $z$, which we took uniformly distributed on $[-100,100]$.
Some $5$-tuples are not realizable by a pair of real matrices (a precise description being provided by \cref{p.real} below), and these can be discarded.
For each random $5$-tuple we can quickly compute the spectral radii of products of length up to $\ell$  using the Fricke polynomials.
Since we are looking for an example for which the shortest chiral words \eqref{e.SC_pair} are SMPs, we initially compute the normalized spectral radius of $A^2 B^2 AB$, and then compare it to the normalized spectral radius of the products in our list, immediately stopping if a better product is found (which happens in the vast majority of cases). If a viable candidate is found,  we produce actual matrices with the prescribed traces and determinants \eqref{e.5_var}, and then apply the polytope algorithm with balancing \cite{algo2} to confirm whether the pair of matrices does in fact make the products \eqref{e.SC_pair} spectrum maximizing.

In a specific run, we expanded the search to all chiral pairs of length up to $9$, in total $23$ pairs. We sampled $10^8$ five-tuples, from which $51$ viable candidates for non-uniquess were found. From those candidates, the polytope algorithm confirmed $20$ concrete instances of non-unique SMPs.

\subsection{More examples on non-unique SMPs}

We include more examples of 5-tuples $(x,y,z,u,v)$ corresponding to matrices $A$, $B$ with chiral (therefore non-unique) SMPs. The last column indicates that an invariant polytope with $2n$ vertices was found.

\begin{center}
\begin{tabular}{rrrrrrrr}
\thead{SMP} &\thead{$x$} &\thead{$y$}  &\thead{$z$}  &\thead{$u$}    &\thead{$v$}    &\thead{$n$} \\
$A^2B^2AB$ & $3.38477$  &$-0.84501$ &$5.58856$ &$4.29803$ &$5.99245$  &$18$\\
$A^2B^2AB$ & $-1.81325$  &$3.83802$ &$8.57711$ &$8.79352$ &$7.69271$  &$18$ \\
$A^3BAB^2$ & 
$-0.28009$  &$2.51662$ &$-9.78050$ &$7.09393$ &$3.76472$  &$34$\\
$A^3BAB^2$ & 
$-2.41561$  &$4.01089$ &$-10.27036$ &$8.39182$ &$4.16903$  &$40$\\
$A^2B^2AB^3$ & $-2.27713$ & $-4.85077$ & $-3.83135$ & $7.50043$ & $7.58161$ & $19$\\
$A^2B^2AB^3$ & $-2.46102$ & $-5.50086$ & $-4.86349$ & $9.90656$ & $9.80116$ & $23$\\
$A^2BA^2BAB^2$ & $2.48264$& $-0.68806$& $ 3.67748$& $2.74344$& $3.59137$& $30$\\
$A^2BA^2BAB^2$ & $3.16180$& $-0.93207$& $ 5.83803$& $4.74510$& $5.58561$& $30$
\end{tabular}
\end{center}

\subsection{Realizability of traces and determinants}

For every $5$-tuple $(x,y,z,u,v)$ of \emph{complex} numbers, it is easy to find complex $2 \times 2$ matrices satisfying the equations \eqref{e.5_var} (see the proof of \cref{p.Fricke}).
The situation of real matrices is more complicated, however:

\begin{proposition}\label{p.real}
Given $(x,y,z,u,v) \in \mathbb{R}^5$,
there exists a pair of matrices $(A,B) \in \mathcal{M}_2(\mathbb{R})^2$ solving the system of equations 
\begin{equation}\label{e.5_var_again}
\tr A = x, \quad
\tr B = y, \quad
\tr AB = z, \quad
\det A = u,\quad 
\det B = v 
\end{equation}
if and only if
the symmetric matrix
\begin{equation}\label{e.mystic}
M \coloneqq 
\begin{bmatrix}
u   & x/2   & z/2 \\
x/2 & 1     & y/2 \\
z/2 & y/2   & v
\end{bmatrix}
\end{equation}
is \emph{not} positive definite.
\end{proposition}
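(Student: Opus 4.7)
The plan is to recast the system~\eqref{e.5_var_again} as a question about Gram matrices for the trace form on $\mathfrak{sl}_2(\mathbb{R})$. First I would reduce to the traceless case: set $A' := A - (x/2)I$ and $B' := B - (y/2)I$, and observe, using $\det(X + \lambda I) = \det X + \lambda \tr X + \lambda^2$ for $2\times 2$ matrices, that $(A,B) \in \mathcal{M}_2(\mathbb{R})^2$ solves~\eqref{e.5_var_again} if and only if $A', B' \in \mathfrak{sl}_2(\mathbb{R})$ satisfy $\tr(A'^2) = (x^2-4u)/2$, $\tr(B'^2) = (y^2-4v)/2$, and $\tr(A'B') = z - xy/2$. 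The question thus becomes whether there exist two vectors in $\mathfrak{sl}_2(\mathbb{R})$ whose Gram matrix with respect to $\langle X, Y\rangle := \tr(XY)$ equals $G' := \tfrac{1}{2}\left(\begin{smallmatrix}x^2-4u & 2z-xy \\ 2z-xy & y^2-4v\end{smallmatrix}\right)$.

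The next step is to identify the signature of this form. For $X = \left(\begin{smallmatrix}\alpha & \beta \\ \gamma & -\alpha\end{smallmatrix}\right) \in \mathfrak{sl}_2(\mathbb{R})$, a direct computation gives $\tr(X^2) = 2\alpha^2 + 2\beta\gamma = 2\alpha^2 + \tfrac{1}{2}(\beta+\gamma)^2 - \tfrac{1}{2}(\beta-\gamma)^2$, so the trace form has signature $(2,1)$. A general fact from the theory of real quadratic forms then characterizes realizability: a symmetric real $2\times 2$ matrix $G$ arises as a Gram matrix of two vectors in a signature-$(2,1)$ inner product space if and only if $G$ is \emph{not} negative definite. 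The forbidden direction is immediate, since a negative-definite Gram matrix would force the span of the two vectors to be a $2$-dimensional negative-definite subspace, which cannot exist in a space with only one negative direction; the converse is handled by case analysis on the signature of $G$, placing the vectors in a spacelike plane, a hyperbolic plane, a negative line, or a null line as appropriate.

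Finally, a short algebraic computation matches negative definiteness of $G'$ with positive definiteness of $M$. A cofactor expansion of~\eqref{e.mystic} gives $16 \det M = (x^2-4u)(y^2-4v) - (2z-xy)^2 = 4 \det G'$, so $\det M$ and $\det G'$ have the same sign. Sylvester's criterion for $M$ positive definite requires $u > 0$, $u > x^2/4$, and $\det M > 0$; the first condition is absorbed by the second since $x^2 \geq 0$, and what remains is $x^2 - 4u < 0$ together with $\det G' > 0$, which is exactly negative definiteness of $G'$. The main obstacle in the argument is the Gram-realizability characterization in the previous paragraph: the case analysis, while standard, is where the geometry of $\mathfrak{sl}_2(\mathbb{R})$ as a Minkowski-type space does the real work of the proof.
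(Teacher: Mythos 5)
Your argument is correct and takes a genuinely different route from the paper's. The paper works directly with the matrix pencil $P(\xi_1,\xi_2,\xi_3) = \xi_1 A + \xi_2 I + \xi_3\,\mathrm{adj}(B)$: the $3\times 3$ matrix $M$ in \eqref{e.mystic} is exactly the matrix of the ternary quadratic form $Q = \det P$, so for the ``only if'' direction a dimension count on the first column of $P$ produces a nonzero $(\xi_1,\xi_2,\xi_3)$ with $Q=0$, forcing $M$ not positive definite, while the ``if'' direction is handled by three explicit constructions according to the signs of $4u-x^2$ and $4v-y^2$. You instead shift to traceless matrices, reinterpret \eqref{e.5_var_again} as a Gram-matrix realizability problem for the trace form on $\mathfrak{sl}_2(\mathbb{R})$, identify its $(2,1)$ signature, and invoke the criterion that a symmetric $2\times 2$ matrix $G'$ is a Gram matrix in a Lorentzian plane iff $G'$ is not negative definite; the final computation $16\det M = (x^2-4u)(y^2-4v)-(2z-xy)^2 = 4\det G'$ together with Sylvester's criterion then translates this to ``$M$ not positive definite.'' Your reduction makes the source of the positive-definiteness obstruction transparent (the single negative direction of the trace form forbids a negative-definite two-dimensional span, and nothing else is obstructed), and it collapses the three construction cases into one appeal to a standard realizability fact for indefinite forms, though a full write-up would need to spell out that case analysis; the paper's pencil approach is more self-contained but the converse is correspondingly more ad hoc, and it has the virtue of producing the matrix $M$ directly as the natural object rather than via a $2\times 2$ surrogate.
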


Recalling Sylvester's criterion for positive definiteness in terms of the positivity of any nested sequence of principal minors  \cite[Theorem~7.2.5]{HJ}, we see that the realizable $5$-tuples for the equations \eqref{e.5_var_again} can be described for instance as:
\begin{equation}
\min \big( 4u-x^2, 4uv-z^2  -vx^2-uy^2+xyz\big) \le 0 \, .
\end{equation}

\begin{proof}[Proof of \cref{p.real}]
Fix matrices $A$, $B$ and compute the corresponding five numbers \eqref{e.5_var_again} and the matrix $M$ from \eqref{e.mystic}.
Consider the following matrix pencil on three real variables:
\begin{equation}
P (\xi_1,\xi_2,\xi_3) \coloneqq \xi_1 A + \xi_2 I + \xi_3 \, \mathrm{adj}(B) =
\begin{bmatrix}
a_{11}\xi_1+\xi_2               +b_{22}\xi_3 & 
a_{12}\xi_1\phantom{\; +\xi_2}  -b_{12}\xi_3 \\
a_{21}\xi_1\phantom{\; +\xi_2} -b_{21}\xi_3 & 
a_{22}\xi_1+\xi_2               +b_{11}\xi_3 
\end{bmatrix}
\end{equation}
where $\mathrm{adj}(B)$ is the adjugate of $B$.
By inspection (or using \cite[eq.~(7), p.~9]{Fenchel}),
the matrix of the quadratic form 
\begin{equation}
Q(\xi_1,\xi_2,\xi_3) \coloneqq \det P(\xi_1,\xi_2,\xi_3) 
\end{equation}
is exactly $M$.
By dimension counting, we can find real numbers $\xi_1$, $\xi_2$, $\xi_3$, not all of them zero, such that the first column of the matrix $P(\xi_1,\xi_2,\xi_3)$ vanishes, and in particular $Q(\xi_1,\xi_2,\xi_3) = 0$.
So the quadratic form $Q$ cannot be positive definite, proving the ``only if'' part of the \lcnamecref{p.real}.

For the converse, consider a positive definite matrix $M$ of the form \eqref{e.mystic}, and let us exhibit a solution $(A,B)$ for the system of equations \eqref{e.5_var_again}.

First, consider the case where $4u-x^2 \le 0$. Then we can find real numbers $\lambda_1$, $\lambda_2$ whose sum is $x$ and whose product is $u$.
Take $A = \left[ \begin{smallmatrix} \lambda_1 & \mu \\ 0 & \lambda_2 \end{smallmatrix} \right]$, where $\mu$ is a free parameter. On the other hand, choose a matrix $B = \left[ \begin{smallmatrix} a & b \\ c & d \end{smallmatrix} \right]$ with $\tr B = y$ and $\det B = v$, taking the precaution that $c \neq 0$. Then
\begin{equation}
\tr AB = \lambda_1 a + \mu c + \lambda_2 d \, .
\end{equation}
So there exists $\mu$ meeting the condition $\tr AB = z$, and we found a solution for the system \eqref{e.5_var_again}.

The case where $4v-y^2 \le 0$ is entirely analogous, reversing the roles of the two matrices.

Finally, assume that $4u-x^2>0$ and $4v-y^2>0$.
In this case, $\det M \le 0$, otherwise the matrix $M$ would be positive definite by Sylvester's criterion.
Let
\begin{equation}\label{eliptic}
A = 
\begin{bmatrix}
x/2 & -\sqrt{u-x^2/4} \\ 
\sqrt{u-x^2/4} & x/2 
\end{bmatrix}, \ 
B =
\begin{bmatrix}
y/2 & -\mu^{-1}\sqrt{v-y^2/4} \\ 
\mu\sqrt{v-y^2/4} & y/2 
\end{bmatrix},
\end{equation}
where $\mu$ is a free parameter.
These matrices have the desired traces and determinants, so we only need to worry about the trace of their product:
\begin{equation}
\tr AB = \frac{xy}{2} - \left(\mu + \mu^{-1}\right) \sqrt{u-\frac{x^2}{4}}\sqrt{v-\frac{y^2}{4}} \, .
\end{equation}
Equating this to $z$, we obtain a quadratic equation $a \mu^2 + b\mu + c = 0$, where 
\begin{equation}
a=c=\sqrt{u-\frac{x^2}{4}}\sqrt{v-\frac{y^2}{4}} \, , \quad b = z - \frac{xy}{2} \, .
\end{equation}
As a computation shows, the discriminant $b^2-4ac$ is exactly $-4 \det M$, hence nonnegative.
So a real solution $\mu$ exists.
Note that $\mu \neq 0$, so our matrices $A$ and $B$ are well-defined, and solve equations \eqref{e.5_var_again}.
\end{proof}

\section{Questions}\label{s.questions}

Our findings suggest a few questions. 
The main finding of this paper is the negative answer to  \cref{q.unique} in dimension $d=2$ (not only for pairs of matrices, but for families of any finite cardinality $k \ge 2$: see \cref{ss.additional}). What about higher dimensions? As discussed in \cref{ss.further}, there are no candidate words that could violate uniqueness. Therefore, \cref{q.unique} is entirely open when $d\ge 3$.

For the rest of this section, let us confine ourselves to the simpler setting of pairs of real $2 \times 2$ matrices.

Recall that $2$-isospectral words $w_1$, $w_2\in \mathbb{F}_2^+$ are permutations of one another, and in particular have the same length. Therefore, given a pair $(A,B)$  of $2 \times 2$ matrices, if the product $\Pi = w_1(A,B)$ is an SMP, then so is the  product $\Pi' = w_2(A,B)$. To put in another way, $2$-isospectrality is a natural enemy to uniqueness of SMPs. To take that fact into account, \cref{q.unique} (specialized to pairs of $2\times 2$ matrices) needs to be ``corrected'' as follows:

\begin{question}\label{q.rel_unique}
Does Lebesgue-almost every pair family of $2 \times 2$ matrices have a unique SMP, up to $2$-isospectrality?
\end{question}

Since we do not know if existence of SMP is a typical property (Maesumi conjecture), let us formulate a simpler version of the question above:

\begin{question}
Is it true that for Lebesgue almost every pair $(A,B)$ of real $2 \times 2$ matrices, all SMPs (if they exist) are $2$-isospectral to one another?
\end{question}

The question above could be accessible if the property of $2$-isospectrality were better understood.

Next, we pose a different kind of problem.

\begin{question}\label{q.winners}
Which words in two letters can be SMPs of a pair of $2 \times 2$ matrices with the relative uniqueness property?
\end{question}

For example, every finite sturmian word appears as a unique SMP of some pair of matrices: see \cite{MS}.
The same is true for words of the form $a^m b^n$, though we do not want to extend this note by including a proof. 
On the opposite direction, it seems reasonable to expect that many (if not most) words do \emph{not} have the property alluded in \cref{q.winners}, but nevertheless we are currently unable to exhibit a single one. 

As mentioned in the introduction, even if a pair of matrices does not admit a SMP (that is, it is a counterexample to the finiteness conjecture), it always admits a Lyapunov-maximizing measure $\mu$, which is a ergodic shift-invariant probability measure on the space of infinite strings of two letters for which the first Lyapunov exponent is maximized.  Note that a Lyapunov-maximizing measure is atomic if and only if it comes from an SMP.

\begin{question}
Suppose that $\mu$ is a \emph{non-atomic} ergodic measure for the shift, and let $M_\mu$ be the set of pair of matrices for which $\mu$ is Lyapunov-maximizing. Does $M_\mu$ have empty interior?
\end{question}

An analogous result for Birkhoff averages is due to Yuan and Hunt \cite{YH}, and in fact was one of the first theoretical results in commutative ergodic optimization.

\section*{Acknowledgements}
We thank Alexander Christie, Sean Lawton, and Peter Selinger for sharing information about trace identities. 
We are grateful to Thomas Mejstrik for helpful discussions and assistance with his invariant polytope software \cite{thomas}.
We thank Ian D.\ Morris for several valuable comments and suggestions. 
We also thank the referees for corrections and improvements. Our original polytope had $18 \times 2$  vertices, and a referee noted that the same pair of matrices admits an invariant polytope with $16 \times 2$ vertices.

\end{document}